\newtheorem{theorem}{Theorem}%
\newtheorem{proposition}{Proposition}%
\newtheorem{corollary}{Corollary}%
\newtheorem{assumption}{Assumption}%
\newtheorem{remark}{Remark}
\definecolor{myGreen}{rgb}{ 0, 0.7, 0.3 }
\definecolor{myBlue}{rgb}{ 0, 0.4, 1 }
\definecolor{myPurple}{rgb}{ 0.7, 0, 0.3 }
\definecolor{myGray}{gray}{ 0.7 }
\newcommand{\cR}{\textcolor{red}}
\pgfplotsset{compat=newest}
\newcommand{\MyHighlight}[1]{\textbf{#1}}
\title{
Theoretical Analysis of Heteroscedastic Gaussian Processes with Posterior Distributions}
\author{Yuji Ito%
\thanks{%
This work has been submitted to a journal or conference for possible publication. Copyright may be transferred without notice, after which this version may no longer be accessible.
This work was not supported by any organization. 
{We would like to thank Editage (www.editage.jp) for English language editing.}
}
\thanks{Yuji Ito is the corresponding author and with Toyota Central R\&D Labs., Inc., 41-1 Yokomichi, Nagakute-shi, Aichi 480-1192, Japan {\tt\small ito-yuji@mosk.tytlabs.co.jp}}%
}
\date{\empty}
\begin{document}
\maketitle
\thispagestyle{empty}
\pagestyle{empty}

\begin{abstract}
This study introduces a novel theoretical framework for analyzing heteroscedastic Gaussian processes (HGPs) that identify unknown systems in a data-driven manner.
Although HGPs effectively address the heteroscedasticity of noise in complex training datasets,
calculating the exact posterior distributions of the HGPs is challenging, as these distributions are no longer multivariate normal.
This study derives the exact means, variances, and cumulative distributions of the posterior distributions.
Furthermore, the derived theoretical findings are applied to a chance-constrained tracking controller.
After an HGP identifies an unknown disturbance in a plant system, the controller can handle chance constraints regarding the system despite the presence of the disturbance.

\end{abstract}

\newcommand{\SymColor}[1]{\cR{#1}}


\newcommand{\PDFratio}{\SymColor{\mathcal{R}({\slnfv{1:\NumData}} , {\lnfv_{1:\NumData}})}}

\newcommand{\notationCollection}{\SymColor{(i)}}
\newcommand{\notationDiag}{\SymColor{(ii)}}
\newcommand{\notationEl}{\SymColor{(iii)}}
\newcommand{\notationExpect}{\SymColor{(iv)}}
\newcommand{\notationCondExpect}{\SymColor{(v)}}
\newcommand{\notationNormalDist}{\SymColor{(vi)}}


\newcommand{\JitterVal}{\SymColor{\varepsilon}}

\newcommand{\pdTransMat}{\SymColor{R}}

\newcommand{\sysx}[2]{  \El{\SymColor{\xi}(#1)}{#2}    }
\newcommand{\sysxV}[1]{\SymColor{\SymColor{\xi}(#1)}}
\newcommand{\sysu}[1]{\SymColor{u(#1)}}

\newcommand{\sysrMargin}{\SymColor{\overline{r}}}
\newcommand{\sysr}[2]{  \El{\SymColor{r}(#1)}{#2}    }
\newcommand{\sysrV}[1]{\SymColor{r}(#1)}
\newcommand{\sysv}[1]{\SymColor{v(#1)}}
\newcommand{\sysPreu}[1]{\SymColor{\hat{u}(#1)}}

\newcommand{\MyT}{\SymColor{t}}
\newcommand{\MydT}{\SymColor{\tau}}

\newcommand{\Thorizon}{  \SymColor{T}   }

\newcommand{\FBgain}{  \SymColor{\kappa}   }

\newcommand{\hypMkerfm}{\SymColor{\nu}^{\fm}}
\newcommand{\hypSkerfm}[1]{\SymColor{\varrho}^{\fm}_{#1}}


\newcommand{\unPDF}{\SymColor{\widetilde{p}}}
\newcommand{\unConstPDF}{\SymColor{Z}}
\newcommand{\PDF}{\SymColor{p}}
\newcommand{\proposalPDF}{\SymColor{q}}

\newcommand{\CoveringNum}{\SymColor{\mathcal{C}(\LipschitzGrid,\CompactDomX)}}

\newcommand{\UniErrDelta}{\SymColor{\rho}}
\newcommand{\UniErrCoefBeta}{\SymColor{\beta(\LipschitzGrid,\UniErrDelta)}}

\newcommand{\LipschitzInputClosest}{\SymColor{\xi_{\Input}}}
\newcommand{\Lipschitzfm}{\SymColor{\mathcal{L}_{\fm}}}
\newcommand{\Lipschitzgpmfm}{\SymColor{\mathcal{L}_{\gpmfm}}}
\newcommand{\Lipschitzgpsdfm}{\SymColor{\mathcal{L}_{\gpsdfm}}}

\newcommand{\Lipschitzkernel}{\SymColor{\mathcal{L}_{k}}}
\newcommand{\LipschitzGrid}{\SymColor{\nu}}
\newcommand{\LipschitzCompactDomX}{\SymColor{\widetilde{\mathcal{X}}_{\LipschitzGrid}}}
\newcommand{\CompactDomX}{\SymColor{\widetilde{\mathcal{X}}}}

\newcommand{\sparseControllerL}{\SymColor{u_{\mathrm{l}}}}
\newcommand{\sparseControllerU}{\SymColor{u_{\mathrm{u}}}}

\newcommand{\CDFdelta}{\SymColor{\delta}  }
\newcommand{\CDFlev}{\SymColor{\gamma}  }

\newcommand{\CDFlevL}{\SymColor{\CDFlev_\mathrm{l}}  }
\newcommand{\CDFlevU}{\SymColor{\CDFlev_\mathrm{u}}  }

\newcommand{\CDFlevSpecialU}{\SymColor{\CDFlev_\mathrm{u\ast}}  }
\newcommand{\CDFlevSpecialL}{\SymColor{\CDFlev_\mathrm{l\ast}}  }
\newcommand{\CDFdeltaSpecial}{\SymColor{\delta_{\ast}}  }

\newcommand{\CDFlevSpecialTerm}{\SymColor{\eta}}

\newcommand{\CDFlevStandardNormal}{\SymColor{\widetilde{\gamma}}  }

\newcommand{\Qfunc}{\SymColor{Q}  }

\newcommand{\erfc}{\SymColor{\mathrm{erfc}}  }

\newcommand{\fmForInt}{\SymColor{\widetilde{\fm}}}

\newcommand{\gpmfm}{\SymColor{\mu}_{\fm}}
\newcommand{\gpsdfm}{\SymColor{\sigma}_{\fm}}

\newcommand{\gpmGhfm}{\SymColor{\mu}_{\fm|\lnfv}}
\newcommand{\gpsdGhfm}{\SymColor{\sigma}_{\fm|\lnfv}}

\newcommand{\gpsdGramCollection}{\SymColor{\mathcal{K}}}

\newcommand{\wildcard}{\SymColor{\bullet}}

\newcommand{\IDEl}{\SymColor{i}}
\newcommand{\IDbEl}{\SymColor{j}}
\newcommand{\IDcEl}{\SymColor{i^{\prime}}}
\newcommand{\IDdEl}{\SymColor{j^{\prime}}}

\newcommand{\Identity}{\SymColor{{I}}}

\newcommand{\za}{\SymColor{a}}
\newcommand{\zb}{\SymColor{b}}
\newcommand{\zc}{\SymColor{c}}
\newcommand{\zd}{\SymColor{d}}
\newcommand{\ze}{\SymColor{e}}
\newcommand{\zf}{\SymColor{f}}
\newcommand{\zg}{\SymColor{g}}
\newcommand{\sparseonehotv}{\SymColor{\boldsymbol{h}}}
\newcommand{\zi}{\SymColor{i}}
\newcommand{\zj}{\SymColor{j}}
\newcommand{\zk}{\SymColor{k}}
\newcommand{\zl}{\SymColor{l}}
\newcommand{\zm}{\SymColor{m}}
\newcommand{\zn}{\SymColor{n}}
\newcommand{\zo}{\SymColor{o}}
\newcommand{\zp}{\SymColor{p}}
\newcommand{\zq}{\SymColor{q}}
\newcommand{\zr}{\SymColor{r}}
\newcommand{\zs}{\SymColor{s}}
\newcommand{\zbs}{\SymColor{s^{\prime}}}
\newcommand{\zt}{\SymColor{t}}
\newcommand{\zzzu}{\SymColor{\boldsymbol{u}}}
\newcommand{\zw}{\SymColor{w}}
\newcommand{\zx}{\SymColor{x}}
\newcommand{\zy}{\SymColor{y}}
\newcommand{\zz}{\SymColor{z}}

\newcommand{\zzzB}{\SymColor{\boldsymbol{B}}}
\newcommand{\zC}{\SymColor{C}}
\newcommand{\zD}{\SymColor{D}}
\newcommand{\zF}{\SymColor{F}}
\newcommand{\zG}{\SymColor{G}}
\newcommand{\zH}{\SymColor{H}}
\newcommand{\zJ}{\SymColor{J}}
\newcommand{\zzzK}{\SymColor{\boldsymbol{K}}}
\newcommand{\zzzY}{\SymColor{\boldsymbol{Y}}}
\newcommand{\zM}{\SymColor{M}}
\newcommand{\zN}{\SymColor{N}}
\newcommand{\zO}{\SymColor{O}}
\newcommand{\zP}{\SymColor{P}}

\newcommand{\zS}{\SymColor{S}}
\newcommand{\zT}{\SymColor{T}}
\newcommand{\zU}{\SymColor{U}}

\newcommand{\zalpha}{\SymColor{\alpha}}
\newcommand{\zbeta}{\SymColor{\beta}}
\newcommand{\zgamma}{\SymColor{\gamma}}

\newcommand{\zrho}{\SymColor{\rho}}

\newcommand*{\VEC}[2][]{\SymColor{\mathrm{vec }#1(} #2 \SymColor{#1)}}
\newcommand{\DIAG}[1]{\SymColor{\mathrm{diag}}(#1)}
\newcommand{\eDIAG}[1]{\SymColor{\mathrm{ediag}}(#1)}

\newcommand{\Zdiag}{\SymColor{\mathrm{diag}}}
\newcommand{\Zeig}[1]{\SymColor{\lambda_{#1}}}
\newcommand{\Zsingularmax}{\SymColor{\sigma_{\max}}}
\newcommand{\Zeigmin}{\SymColor{\lambda_{\min}}}
\newcommand{\Zeigmax}{\SymColor{\lambda_{\max}}}

\newcommand{\ZsetY}{\SymColor{\mathcal{Y}}}

\newcommand{\Zfro}{\SymColor{\mathrm{F}}}







\newcommand*{\NotationVec}{\SymColor{{\nu}}}
\newcommand*{\NotationMat}{\SymColor{{\Sigma}}}
\newcommand*{\NotationSymMat}{\SymColor{{\Gamma}}}

\newcommand*{\NotationSto}{\SymColor{{\theta}}}
\newcommand*{\NotationFunc}{\SymColor{{\varphi}}}
\newcommand*{\IDNotation}{\SymColor{i}}
\newcommand*{\IDbNotation}{\SymColor{j}}
\newcommand*{\DimANotation}{\SymColor{a}}
\newcommand*{\DimBNotation}{\SymColor{b}}
\newcommand*{\NotationA}{\SymColor{1}}
\newcommand*{\NotationB}{\SymColor{2}}


%


\newcommand{\opQE}{\SymColor{\mathbb{E}_{\proposalPDF}}}
\newcommand{\opE}{\SymColor{\mathbb{E}}}
\newcommand{\opV}{\SymColor{\mathbb{V}}}

\newcommand*{\El}[2]{\SymColor{[}{#1}\SymColor{]}_{#2}}

\newcommand{\iEig}[2]{\SymColor{\lambda}_{#1}(#2)}

\newcommand*{\MyTop}{\SymColor{\top}}

\newcommand*{\TRACE}[1]{\SymColor{\mathrm{tr}}(#1)}


\newcommand{\DomX}{\SymColor{\mathcal{X}}}
\newcommand{\DimX}{\SymColor{n}}

\newcommand{\Input}{\SymColor{x}}
\newcommand{\Noise}{\SymColor{w}}
\newcommand{\Output}{\SymColor{y}}

\newcommand{\Dataset}[1]{\SymColor{\mathcal{D}}_{#1}}

\newcommand{\NumSamp}{\SymColor{S}}
\newcommand{\IDsamp}{\SymColor{s}}
\newcommand{\IDbsamp}{\SymColor{s^{\prime}}}

\newcommand{\NumMC}{\SymColor{M}}
\newcommand{\IDMC}{\SymColor{m}}

\newcommand{\AnyNumData}{\SymColor{\underline{D}}}
\newcommand{\NumData}{\SymColor{D}}
\newcommand{\IDt}{\SymColor{d}}
\newcommand{\IDbt}{\SymColor{d^{\prime}}}

\newcommand{\fm}{\SymColor{f}}
\newcommand{\fv}{\SymColor{g}}
\newcommand{\lnfv}{\SymColor{h}}

\newcommand{\OutSamp}[2]{\Output_{#1,#2}}
\newcommand{\OutSampMean}[1]{\SymColor{\Output_{#1}^{\mathrm{E}}}}
\newcommand{\OutSampVar}[1]{ \SymColor{\Output_{#1}^{\mathrm{V}}}}
\newcommand{\OutSampVarForChangeOfVar}{ \SymColor{\widetilde{y}}}

\newcommand{\LCSdist}{\SymColor{ p_{ \ln \chi^{2} }  }}

\newcommand{\gpmlnfv}{\SymColor{\mu}_{\lnfv} }
\newcommand{\gpcovlnfv}{\SymColor{\Sigma}_{\lnfv} }

\newcommand{\slnfv}[1]{\SymColor{z_{#1}}}
\newcommand{\lnfvvarSQRT}{\SymColor{\omega_{\lnfv}}}
\newcommand{\lnfvvar}{\lnfvvarSQRT^{2}}

\newcommand{\kerfm}{\SymColor{k}}
\newcommand{\kerfv}{\SymColor{\ell}}

\newcommand{\kGramfm}{\SymColor{K}}
\newcommand{\kGramfv}{\SymColor{L}}
\newcommand{\kvecfm}{\SymColor{k}}
\newcommand{\kvecfv}{\SymColor{\ell}}

\newcommand*{\NkGramfm}{\SymColor{\widetilde{K}}}

\newcommand{\chiPDF}[1]{\SymColor{\chi_{#1}^{2}}}
\newcommand{\chiSamp}[1]{\SymColor{\zeta}_{#1}}
\newcommand*{\polygamma}[1]{\SymColor{\psi^{(#1)}}}





\renewcommand{\SymColor}[1]{{#1}}

\section{Introduction}\label{sec_intro}

Gaussian process regression (GPR) \cite{Rasmussen06} is a successful approach to identify complex systems as Gaussian processes (GPs) with quantifying their uncertainties.
The GPR has attracted attention of the control community, leading to the development of data-driven approaches such as stability analyses \cite{Berkenkamp16CDC,ItoCDC19}, nonlinear optimal control \cite{Boedecker14,PILCO15PAMIRN268,ItoIEICE22,ItoAccess20}, H-infinity control \cite{Fujimoto18JCMSI,ItoAccess20}, reinforcement learning \cite{Berkenkamp17}, model predictive control (MPC) \cite{Guzman21RAL,Kocijan03}, and controller tuning with Bayesian optimization \cite{Brunzema22CDC}.
Control applications using GPs including robotics \cite{NguyenTuong08RN269,Berkenkamp16ICRARN270} and aircraft \cite{Hemakumara13RN271} have also been developed.

Heteroscedastic GPs (HGPs) can treat the heteroscedasticity of noise in training datasets, unlike standard GPs that assume homoscedastic noise.
This heteroscedasticity arises from the complexity of regression tasks.
For instance, stochastic kriging has encountered heteroscedastic simulation responses \cite{Chen17EJOR,Xie20WSC}.
Feynman-Kac samples have been considered as heteroscedastic data points in solutions to partial differential equations \cite{Inoue24arXiv}.
Performance prediction of nonlinear MPC using GPs has focused on heteroscedastic observations \cite{Spenger23IFACWC}.
Benefits of heteroscedasticity of GPs have been discussed  in stochastic optimization for trajectory planning \cite{Petrovic19ECMR}.

However, analytically realizing heteroscedastic GPR (HGPR) is infeasible due to the difficulty in obtaining the exact posterior distributions of HGPs while those of homoscedastic GPs have been given by simple normal distributions \cite{Rasmussen06}.
The posterior distributions correspond to the predictions of target functions under GP settings.
This challenge arises from the uncertainty of heteroscedastic noise, which is difficult to estimate from datasets in a Bayesian manner.
Many approaches have approximated the noise variance using deterministic models without uncertainty, conflicting with Bayesian estimation.
Such deterministic approximations have used
polynomial chaos expansions \cite{Polke24},
the posterior mean of a GP \cite{Menner23CCTA,Caravagna16CMSB},
most likely noise levels \cite{Zhang20TSP,Kersting07ICML},
nonlinear basis functions such as polynomials \cite{Guzman21RAL,Boukouvalas14CSDA}, 
a (weighted unbiased) sample variance at each data point and/or its neighbors \cite{Inoue24arXiv,Feinstein24arXiv,Spenger23IFACWC,Caravagna16CMSB}, 
and
worst-case variance values \cite{Makarova21NeurIPS}.
Other methods have approximated the posterior distributions of HGPs \cite{Munoz14TNNLS,Munoz16PAMI}.
Although sampling-based methods have numerically computed the posterior distributions \cite{Heinonen16,Goldberg97NIPS}, their analytical expressions have not been provided.

To address the aforementioned challenge, this study presents novel theoretical results for HGPR, which have not been investigated in the existing work.
We derive the mean, variance, and cumulative distribution of the posterior distribution of an HGP in exact forms, utilizing expectations regarding homoscedastic noise.
A method based on autonormalized importance sampling is proposed to calculate these expectations.
Additionally, this study proposes a chance-constrained tracking controller based on the derived results.
Once HGPR identifies an unknown disturbance within a plant system, the proposed controller can manage a chance constraint regarding the system even if the disturbance is involved.
A numerical simulation is conducted to validate the effectiveness of the proposed controller.

\textbf{\textit{Notation:}} This study uses the following notation.
\begin{itemize}

	\item[\notationCollection]
	${\NotationVec}_{1:\NumData}:=[{\NotationVec}_{1}, {\NotationVec}_{2}, \dots, {\NotationVec}_{\NumData}]^{\MyTop}  \in \mathbb{R}^{\NumData\times \DimANotation}$: 
	the collection of variables ${\NotationVec}_{\IDt} \in \mathbb{R}^{\DimANotation}$ for all $\IDt \in \{1,2,\dots, \NumData\}$

	\item[\notationDiag]
	$\Zdiag(\NotationVec)$: the $\DimANotation \times \DimANotation$ diagonal matrix of which diagonal components are $\NotationVec\in \mathbb{R}^{\DimANotation}$
	
	\item[\notationEl]
	$\El{\NotationVec}{\IDNotation}$: the $\IDNotation$-th component of a vector $\NotationVec \in \mathbb{R}^{\DimANotation}$

	\item[\notationExpect]
	$\opE[\NotationFunc(\NotationSto)]$ (resp. $\opV[\NotationFunc(\NotationSto)]$): 
	the expectation (resp. variance) of a function $\NotationFunc(\NotationSto)$ regarding a random variable $\NotationSto$
	
	\item[\notationCondExpect]
	$\opE[\NotationFunc(\NotationSto_{1}, \NotationSto_{2})|\NotationSto_{2}]$ (resp. 
	$\opV[\NotationFunc(\NotationSto_{1}, \NotationSto_{2})|\NotationSto_{2}]$): 
	the conditional expectation (resp. conditional variance) of a function $\NotationFunc(\NotationSto)$ regarding a random variable $\NotationSto_{1}$ given $\NotationSto_{2}$

	\item[\notationNormalDist]
	$\mathcal{N}(\NotationSto | \NotationVec, \NotationMat )$:
	the multivariate normal distribution with the mean $\NotationVec$ and covariance $\NotationMat$ that a random variable $\NotationSto$ obeys,
	where this is often denoted by $\mathcal{N}( \NotationVec, \NotationMat )$.

\end{itemize}

\section{Problem setting}\label{sec_problem}

Consider two unknown functions $\fm: \DomX \to \mathbb{R}$ and $\fv: \DomX \to (0,\infty)$ for a given set $\DomX \subseteq \mathbb{R}^{\DimX}$.
We assume that a training dataset is given to identify $\fm$ and $\fv$ as follows:
Let $\Input_{\IDt} \in \DomX$ for $\IDt \in \{ 1,2,\dots,\NumData\}$ be $\NumData$ training inputs that satisfy $\Input_{\IDt}\neq \Input_{\IDbt}$ for every $\IDt\neq\IDbt$.
For each $\Input_{\IDt}$, we can observe $\NumSamp$ outputs $\Output_{\IDt,\IDsamp}$ for $\IDsamp \in \{ 1,2,\dots,\NumSamp\}$ with $\NumSamp \geq 1$ that obey
\begin{align}
	\Output_{\IDt,\IDsamp}
	&=  \fm(\Input_{\IDt}) + \Noise_{\IDt,\IDsamp}
	,\label{eq:def_output}
	\\
	\Noise_{\IDt,\IDsamp} 
	\big| \fv(\Input_{\IDt}) 
	& \sim \mathcal{N}(0, \fv(\Input_{\IDt})^{2} )
	,\label{eq:def_noise}
\end{align}
where $\Noise_{\IDt,\IDsamp}\in \mathbb{R}$ is a random noise term that is independently and identically distributed (i.i.d.) concerning $(\IDt,\IDsamp)$.
This implies  that $\Output_{\IDt,\IDsamp}$ obeys $\mathcal{N}(\fm(\Input_{\IDt}), \fv(\Input_{\IDt})^{2} )$.

This study utilizes HGPR to estimate the unknown functions $\fm$ and $\fv$ concurrently.
Because the HGPR focuses on functions whose ranges can contain both negative and positive values, the following logarithm $\lnfv: \DomX \to \mathbb{R}$ is identified instead of strictly positive $\fv$:
\begin{align}
	\lnfv(\Input):=\ln \fv(\Input)^{2}
	.\label{eq:def_lnfv}
\end{align}
We state the following main problem.

\textit{\textbf{Main problem:}} 
Estimate the unknown function $\fm: \DomX \to \mathbb{R}$ for the given dataset, namely, find  the mean, variance, and cumulative distribution function of the posterior of $\fm$.

\section{Proposed method with HGPR}\label{sec_method}

We address the main problem by utilizing HGPR, which identifies the unknown functions $\fm$ and $\fv$.
Section \ref{sec_HGPR_theory} presents the mean, variance, and cumulative distribution of the posterior of $\fm$, which correspond solutions to the main problem theoretically.
Section \ref{sec_HGPR_implementation} outlines a practical approach to computing the derived posterior information. 

\subsection{Theoretical analyses of HGPR}\label{sec_HGPR_theory}

The HGPR framework estimates $\fm$ and $\lnfv$ under the following assumption.

\begin{assumption}[{\MyHighlight{Two Gaussian processes}}]\label{ass_GPs}
	The functions $\fm$ and $\lnfv$ obey GPs with strictly positive definite kernels $\kerfm : \DomX\times \DomX \to \mathbb{R} $ and $\kerfv : \DomX\times \DomX \to \mathbb{R} $, respectively.
	Specifically, for any natural number $\AnyNumData$ and any distinct $\Input_{1:\AnyNumData}
	=[\Input_{1}, \dots, \Input_{\AnyNumData}]^{\MyTop}$ satisfying $\Input_{\IDt}\neq \Input_{\IDbt} $ for $\IDt\neq \IDbt$, we have
	$\kGramfm_{\AnyNumData} \succ 0$, $\kGramfv_{\AnyNumData} \succ 0$, and 
	\begin{align}
		\forall \lnfv,
		\quad
		[\fm(\Input_{1}), \dots, \fm(\Input_{\AnyNumData})]^{\MyTop}
		|
		\Input_{1:\AnyNumData}
		&\sim
		\mathcal{N} ( 0 ,\kGramfm_{\AnyNumData} )
		,\nonumber
		\\
		\forall \fm,
		\quad
		[\lnfv(\Input_{1}), \dots, \lnfv(\Input_{\AnyNumData})]^{\MyTop}
		|
		\Input_{1:\AnyNumData}
		&\sim
		\mathcal{N} ( 0 ,\kGramfv_{\AnyNumData} )
		,\nonumber
	\end{align}
	where the following gram matrices $(\kGramfm_{\AnyNumData},\kGramfv_{\AnyNumData})$ and vectorized kernels $( \kvecfm_{\AnyNumData}(\Input), \kvecfv_{\AnyNumData}(\Input))$ are utilized:
	\begin{align}		
		\kGramfm_{\AnyNumData}
		&:=
		[\kvecfm_{\AnyNumData}(\Input_{1}),\kvecfm_{\AnyNumData}(\Input_{2}),\dots,\kvecfm_{\AnyNumData}(\Input_{\AnyNumData})]\in \mathbb{R}^{\AnyNumData \times \AnyNumData}
		,\nonumber\\
		\kGramfv_{\AnyNumData}
		&:=
		[\kvecfv_{\AnyNumData}(\Input_{1}),\kvecfv_{\AnyNumData}(\Input_{2}),\dots,\kvecfv_{\AnyNumData}(\Input_{\AnyNumData})]\in \mathbb{R}^{\AnyNumData \times \AnyNumData}
		,\nonumber\\
		\kvecfm_{\AnyNumData}(\Input)
		&:=[
		\kerfm(\Input,\Input_{1}),\kerfm(\Input,\Input_{2}),\dots,\kerfm(\Input,\Input_{\AnyNumData})
		]^{\MyTop} \in \mathbb{R}^{\AnyNumData }
		,\nonumber\\
		\kvecfv_{\AnyNumData}(\Input)
		&:=[
		\kerfv(\Input,\Input_{1}),\kerfv(\Input,\Input_{2}),\dots,\kerfv(\Input,\Input_{\AnyNumData})
		]^{\MyTop}\in \mathbb{R}^{\AnyNumData }
		.\nonumber
	\end{align}
\end{assumption}

A well-known instance of a strictly positive definite kernel is a squared exponential kernel \cite[Section 3.1]{Posa24CSTM}, as introduced in Section \ref{sec_sim_setting}.
We derive the following theorem to obtain the posterior mean and variance of $\fm$, with the dataset ${\Dataset{}}$ and other symbols defined using Notations {\notationCollection} and {\notationDiag} as follows:
\begin{align}
	{\Dataset{}}
	&:=[ \Input_{1:\NumData} , {\OutSampMean{1:\NumData}} ]
	,\nonumber\\
	{\OutSampMean{\IDt}} &:=\frac{1}{\NumSamp}\sum_{\IDsamp=1}^{\NumSamp} {\OutSamp{\IDt}{\IDsamp}}
	,\label{eq:OutSampMean}
	\\
	\lnfv_{\IDt}&:=\lnfv(\Input_{\IDt})
	,\nonumber\\
	\NkGramfm(\lnfv_{1:\NumData})	
	&:=\kGramfm_{\NumData} + \frac{1}{\NumSamp} \Zdiag(
	[\exp \lnfv_{1}, \dots, \exp \lnfv_{\NumData}]^{\MyTop} 	
	)
	.\nonumber
\end{align}

\begin{theorem}[{\MyHighlight{The posterior mean and variance of $\fm$}}]\label{thm:GP_fm}
	Suppose Assumption \ref{ass_GPs}.
	For any $\Input \in \DomX$, the posterior mean $\opE[\fm(\Input) |  \Input, {\Dataset{}}  ]$ and variance $\opV[\fm(\Input) |  \Input, {\Dataset{}}  ]$ of $\fm$ given $(\Input, {\Dataset{}})$ are expressed as follows:
	\begin{align}
		\opE[\fm(\Input) |  \Input, {\Dataset{}}  ]
		&=
		\kvecfm_{\NumData}(\Input)^{\MyTop} \opE[ \NkGramfm(\lnfv_{1:\NumData})^{-1}  |   {\Dataset{}}] {\OutSampMean{1:\NumData}}
		,\nonumber
		\\
		\opV[\fm(\Input) |  \Input, {\Dataset{}}  ]
		&=
		\kerfm(\Input,\Input)
		+
		\kvecfm_{\NumData}(\Input)^{\MyTop}
		\gpsdGramCollection( {\Dataset{}})
		\kvecfm_{\NumData}(\Input)
		.\nonumber
	\end{align}
	where
	\begin{align}
		\gpsdGramCollection( {\Dataset{}})
		&:=
		\opE[
		\NkGramfm(\lnfv_{1:\NumData})^{-1}   {\OutSampMean{1:\NumData}} {\OutSampMean{1:\NumData}}^{\top} \NkGramfm(\lnfv_{1:\NumData})^{-1} 
		|   {\Dataset{}}   ]
		\nonumber\\&\quad
		-
		\opE[ \NkGramfm(\lnfv_{1:\NumData})^{-1}  |   {\Dataset{}}] 
		{\OutSampMean{1:\NumData}} {\OutSampMean{1:\NumData}}^{\top}
		\opE[ \NkGramfm(\lnfv_{1:\NumData})^{-1}  |   {\Dataset{}}] 	
		\nonumber\\&\quad
		-
		\opE[ 	\NkGramfm(\lnfv_{1:\NumData})^{-1}  	|   {\Dataset{}}   ]
		.\label{eq:def_gpsdGramCollection}
	\end{align}
\end{theorem}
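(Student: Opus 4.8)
The plan is to exploit the conditional structure of the model: once the training log-variances $\lnfv_{1:\NumData}$ are fixed, the heteroscedastic problem collapses to an ordinary Gaussian-process regression with \emph{known} diagonal observation noise, and the stated formulas then emerge by ``de-conditioning'' via the laws of total expectation and total variance. Throughout, I would read each $\opE[\,\cdot\,|{\Dataset{}}]$ as an integral against the posterior $\PDF(\lnfv_{1:\NumData}|{\Dataset{}})$ of the training log-variances; this posterior depends on the data alone and not on the test point $\Input$, since $\lnfv_{1:\NumData}$ concerns only the training inputs. That is precisely why the inner expectations in the theorem are conditioned on ${\Dataset{}}$ while the left-hand moments are conditioned on $(\Input,{\Dataset{}})$. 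Computing $\PDF(\lnfv_{1:\NumData}|{\Dataset{}})$ is itself intractable and is deferred to the implementation; the theorem is a representation result that does not require it.

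First I would establish the joint law of $(\fm(\Input), {\OutSampMean{1:\NumData}})$ given $\lnfv_{1:\NumData}$. By \eqref{eq:OutSampMean} and \eqref{eq:def_output}--\eqref{eq:def_noise}, the averaged noise $\frac{1}{\NumSamp}\sum_{\IDsamp}\Noise_{\IDt,\IDsamp}$ is, conditionally on $\lnfv_{\IDt}$, distributed as $\mathcal{N}(0,\exp(\lnfv_{\IDt})/\NumSamp)$, independent across $\IDt$ and independent of $\fm$; combined with Assumption \ref{ass_GPs} this gives ${\OutSampMean{1:\NumData}}|\lnfv_{1:\NumData}\sim\mathcal{N}(0,\NkGramfm(\lnfv_{1:\NumData}))$ and, jointly with the test value, a multivariate normal whose diagonal block is $\kerfm(\Input,\Input)$ and whose cross-covariance is $\kvecfm_{\NumData}(\Input)$. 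Applying the standard Gaussian conditioning identity then yields
\begin{align}
	\opE[\fm(\Input)|\Input,{\Dataset{}},\lnfv_{1:\NumData}]
	&=\kvecfm_{\NumData}(\Input)^{\MyTop}\NkGramfm(\lnfv_{1:\NumData})^{-1}{\OutSampMean{1:\NumData}},\nonumber\\
	\opV[\fm(\Input)|\Input,{\Dataset{}},\lnfv_{1:\NumData}]
	&=\kerfm(\Input,\Input)-\kvecfm_{\NumData}(\Input)^{\MyTop}\NkGramfm(\lnfv_{1:\NumData})^{-1}\kvecfm_{\NumData}(\Input),\nonumber
\end{align}
which are well defined since $\NkGramfm(\lnfv_{1:\NumData})\succ 0$ (the sum of $\kGramfm_{\NumData}\succ0$ and a positive-definite diagonal matrix).

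The posterior mean then follows from the law of total expectation, $\opE[\fm(\Input)|\Input,{\Dataset{}}]=\opE[\opE[\fm(\Input)|\Input,{\Dataset{}},\lnfv_{1:\NumData}]|{\Dataset{}}]$, after pulling the factors $\kvecfm_{\NumData}(\Input)^{\MyTop}$ and ${\OutSampMean{1:\NumData}}$ (both fixed given ${\Dataset{}}$) outside the inner expectation. For the variance I would invoke the law of total variance: the expected conditional variance contributes $\kerfm(\Input,\Input)-\kvecfm_{\NumData}(\Input)^{\MyTop}\opE[\NkGramfm(\lnfv_{1:\NumData})^{-1}|{\Dataset{}}]\kvecfm_{\NumData}(\Input)$, while expanding the variance of the scalar $\kvecfm_{\NumData}(\Input)^{\MyTop}\NkGramfm(\lnfv_{1:\NumData})^{-1}{\OutSampMean{1:\NumData}}$ as second moment minus squared first moment produces the first two summands of \eqref{eq:def_gpsdGramCollection}. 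Adding the two contributions gives exactly $\kerfm(\Input,\Input)+\kvecfm_{\NumData}(\Input)^{\MyTop}\gpsdGramCollection({\Dataset{}})\kvecfm_{\NumData}(\Input)$, with the $-\opE[\NkGramfm(\lnfv_{1:\NumData})^{-1}|{\Dataset{}}]$ term in $\gpsdGramCollection({\Dataset{}})$ coming from the expected conditional variance.

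The main obstacle is conceptual rather than computational: setting up the conditioning so that (a) given $\lnfv_{1:\NumData}$ the model is a genuine fixed-noise GP, which rests on the independence of $\fm$ and $\lnfv$ encoded by the $\forall\lnfv$ and $\forall\fm$ quantifiers in Assumption \ref{ass_GPs}, and (b) the sample means ${\OutSampMean{1:\NumData}}$ are treated as constants once one conditions on ${\Dataset{}}$. Point (b) is what lets ${\OutSampMean{1:\NumData}}$ factor out of $\opE[\,\cdot\,|{\Dataset{}}]$ and collapses $\opV[\NkGramfm(\lnfv_{1:\NumData})^{-1}{\OutSampMean{1:\NumData}}|{\Dataset{}}]$ into the quadratic form in \eqref{eq:def_gpsdGramCollection}; the remaining steps are routine matrix bookkeeping.
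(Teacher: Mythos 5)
Your proposal is correct and follows essentially the same route as the paper: condition on $\lnfv_{1:\NumData}$ so that the model collapses to a homoscedastic GP with noise covariance $\frac{1}{\NumSamp}\Zdiag([\exp\lnfv_{1},\dots,\exp\lnfv_{\NumData}]^{\MyTop})$, apply the standard Gaussian conditioning identities to get \eqref{eq:gp_mean_given_h}--\eqref{eq:gp_var_given_h}, and then de-condition via the laws of total expectation and total variance, expanding the variance of the conditional mean as second moment minus squared first moment to assemble \eqref{eq:def_gpsdGramCollection}. The only step the paper carries out that you omit is the explicit Schur-complement verification that the joint covariance in \eqref{eq:jointPDF} is positive definite even when $\Input$ coincides with a training input, but this is inessential to your argument since joint Gaussianity holds by construction and the conditioning formulas only require $\NkGramfm(\lnfv_{1:\NumData})\succ 0$, which you establish.
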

\begin{proof}
	The proof is described in Appendix \ref{pf:GP_fm}. 
\end{proof}

\begin{remark}[{\MyHighlight{Contributions of Theorem \ref{thm:GP_fm}}}]
Theorem \ref{thm:GP_fm} elucidates the explicit forms of the posterior mean $\opE[\fm(\Input) |  \Input, {\Dataset{}}  ]$ and variance $\opV[\fm(\Input) |  \Input, {\Dataset{}}  ]$.
This posterior information facilitates the prediction of the unknown function $\fm(\Input)$ even in the presence of heteroscedastic noise.
The computation of expectations $	\opE[ 	\dots 	|   {\Dataset{}}   ]$ is detailed in Section \ref{sec_HGPR_implementation}.
\end{remark}

A crucial issue in HGPR is that the posterior distribution of $\fm$ given $(\Input, {\Dataset{}})$ is generally not a multivariate normal distribution, unlike the posteriors of homoscedastic GPs that are simple multivariate normal distributions.
We derive the cumulative distribution function of the non-normal posterior distribution, using the  complementary error function $\erfc: \mathbb{R} \to [0,2]$:
\begin{align}
\erfc(\CDFlevStandardNormal)
&:=
\frac{2}{\sqrt{\pi}} 
\int_{\CDFlevStandardNormal}^{\infty} \exp(-{\fmForInt}^{2}) \mathrm{d} \fmForInt
.\nonumber
\end{align}

\begin{theorem}[{\MyHighlight{The cumulative distribution function of $\fm$}}]\label{thm:GP_prob_fm}
	Suppose Assumption \ref{ass_GPs}.
	For  any $\Input \in \DomX$ and any $\CDFlev \in \mathbb{R}$, we have
	\begin{align}
		\mathrm{Pr}(      \fm(\Input)  \leq \CDFlev   |  \Input, {\Dataset{}})
		= 1-\CDFdelta(\CDFlev,\Input, {\Dataset{}}) 
		,\nonumber
	\end{align}
	where
	\begin{align}		
		&\CDFdelta(\CDFlev,\Input, {\Dataset{}}) :=
		\nonumber\\&
		\frac{1}{2}
		\opE\Big[
		\erfc\Big(
		\frac{ \CDFlev - \kvecfm_{\NumData}(\Input)^{\MyTop}  \NkGramfm(\lnfv_{1:\NumData})^{-1}   {\OutSampMean{1:\NumData}}  
		}{\sqrt{2} ( \kerfm(\Input,\Input)
			-  \kvecfm_{\NumData}(\Input)^{\MyTop}
			\NkGramfm(\lnfv_{1:\NumData})^{-1}
			\kvecfm_{\NumData}(\Input)   ) }
		\Big)
		\Big|
		\Input, {\Dataset{}}
		\Big]
		.\nonumber
	\end{align}
\end{theorem}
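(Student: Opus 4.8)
The plan is to reduce the heteroscedastic posterior to a family of homoscedastic ones by conditioning on the latent log-variances $\lnfv_{1:\NumData}$ at the training inputs, and then to average the resulting Gaussian cumulative distributions over the posterior of $\lnfv_{1:\NumData}$. First I would observe that, once $\lnfv_{1:\NumData}$ is fixed, the averaged observations $\OutSampMean{1:\NumData}$ arise from a standard GP regression problem: from \eqref{eq:def_noise} and \eqref{eq:OutSampMean} the sample-mean noise at $\Input_{\IDt}$ has variance $\exp(\lnfv_{\IDt})/\NumSamp$, so the effective diagonal observation-noise covariance is exactly the second term in the definition of $\NkGramfm(\lnfv_{1:\NumData})$. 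Hence, conditioned on $(\Input,\Dataset{},\lnfv_{1:\NumData})$, the usual homoscedastic GP posterior applies and $\fm(\Input)$ is Gaussian,
\[
\fm(\Input)\mid \Input,\Dataset{},\lnfv_{1:\NumData}\sim\mathcal{N}\!\big(\gpmGhfm(\Input),\,\gpsdGhfm(\Input)^{2}\big),
\]
with $\gpmGhfm(\Input)=\kvecfm_{\NumData}(\Input)^{\MyTop}\NkGramfm(\lnfv_{1:\NumData})^{-1}\OutSampMean{1:\NumData}$ and $\gpsdGhfm(\Input)^{2}=\kerfm(\Input,\Input)-\kvecfm_{\NumData}(\Input)^{\MyTop}\NkGramfm(\lnfv_{1:\NumData})^{-1}\kvecfm_{\NumData}(\Input)$. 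This is the same conditional decomposition that underlies Theorem \ref{thm:GP_fm}, so I would import it from that proof rather than re-derive it.

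Next I would apply the tower property, taking the outer expectation over the posterior of $\lnfv_{1:\NumData}$ given $\Dataset{}$:
\[
\mathrm{Pr}(\fm(\Input)\leq\CDFlev\mid\Input,\Dataset{})=\opE\big[\mathrm{Pr}(\fm(\Input)\leq\CDFlev\mid\Input,\Dataset{},\lnfv_{1:\NumData})\mid\Input,\Dataset{}\big].
\]
By the conditional Gaussianity above, the inner probability is the normal cumulative distribution $\Phi\big((\CDFlev-\gpmGhfm(\Input))/\gpsdGhfm(\Input)\big)$. I would then rewrite it through the complementary error function using the identity $\Phi(t)=1-\tfrac{1}{2}\erfc(t/\sqrt{2})$, giving an inner value of $1-\tfrac{1}{2}\erfc\big((\CDFlev-\gpmGhfm(\Input))/(\sqrt{2}\,\gpsdGhfm(\Input))\big)$.

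Finally, substituting the explicit expressions for $\gpmGhfm(\Input)$ and $\gpsdGhfm(\Input)$ and pulling the constant $1$ out of the (linear) expectation yields $\mathrm{Pr}(\fm(\Input)\leq\CDFlev\mid\Input,\Dataset{})=1-\CDFdelta(\CDFlev,\Input,\Dataset{})$ with $\CDFdelta=\tfrac{1}{2}\opE[\erfc(\cdots)\mid\Input,\Dataset{}]$, matching the claimed form; the denominator of the $\erfc$ argument is the conditional standard deviation $\gpsdGhfm(\Input)$, i.e.\ $\sqrt{\kerfm(\Input,\Input)-\kvecfm_{\NumData}(\Input)^{\MyTop}\NkGramfm(\lnfv_{1:\NumData})^{-1}\kvecfm_{\NumData}(\Input)}$.

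The main obstacle is rigorously justifying the conditional Gaussian posterior, and in particular that conditioning on the finite vector $\lnfv_{1:\NumData}$ (rather than the whole function $\lnfv$) is sufficient; this rests on the independence of the two GPs in Assumption \ref{ass_GPs} together with the fact that the noise enters only through $\fv(\Input_{\IDt})^{2}=\exp(\lnfv_{\IDt})$ at the observed inputs. A secondary, but routine, point is the measure-theoretic legitimacy of the tower step and of moving $\erfc$ inside the expectation, which is immediate because $\erfc$ is bounded on $\mathbb{R}$. Notably, I would need no explicit description of the (generally non-normal) posterior of $\lnfv_{1:\NumData}$ given $\Dataset{}$; it is enough that the outer expectation $\opE[\cdot\mid\Input,\Dataset{}]$ is taken with respect to it, mirroring the role played by the same expectation in Theorem \ref{thm:GP_fm}.
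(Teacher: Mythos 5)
Your proposal is correct and follows essentially the same route as the paper's proof: condition on $\lnfv_{1:\NumData}$, import the homoscedastic Gaussian posterior \eqref{eq:gp_mean_given_h}--\eqref{eq:gp_var_given_h} from the proof of Theorem \ref{thm:GP_fm}, interchange the integrals (the paper invokes Fubini's theorem where you use the tower property, justified by the boundedness of $\erfc$), and rewrite the Gaussian tail through the identity $\Qfunc(\CDFlevStandardNormal)=\tfrac{1}{2}\erfc(\CDFlevStandardNormal/\sqrt{2})$. Your closing observation that the $\erfc$ denominator is the conditional standard deviation $\gpsdGhfm(\Input)=\sqrt{\kerfm(\Input,\Input)-\kvecfm_{\NumData}(\Input)^{\MyTop}\NkGramfm(\lnfv_{1:\NumData})^{-1}\kvecfm_{\NumData}(\Input)}$ agrees with the paper's own derivation and correctly flags that the theorem statement as printed omits the square root around the variance term.
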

\begin{proof}
The proof is described in Appendix \ref{pf:GP_prob_fm}.
\end{proof}
\begin{remark}[{\MyHighlight{Contributions of Theorem \ref{thm:GP_prob_fm}}}]
Theorem \ref{thm:GP_prob_fm} presents the exact cumulative distribution of the posterior of $\fm$ given $(\Input, {\Dataset{}})$, using the conditional expectation of $\erfc$.
We apply the derived cumulative distribution to evaluating chance constraints for control applications in Section \ref{sec_sim}.
The complementary error function $\erfc$ can be computed with arbitrary precision \cite[Lemma 1]{Chevillard12IC}.
The computation of expectations $	\opE[ 	\dots 	|   {\Dataset{}}   ]$ is detailed in Section \ref{sec_HGPR_implementation}.
\end{remark}

\begin{corollary}[{\MyHighlight{The probability of $\fm$ being in an interval}}]\label{thm:GP_prob_fm_special}
	Suppose Assumption \ref{ass_GPs}.
	For any $\Input\in\DomX$, any $\CDFlevL \in \mathbb{R}$, and any $\CDFlevU \in \mathbb{R} $ such that $\CDFlevL < \CDFlevU$, we have
	\begin{align}
		\mathrm{Pr}(   \CDFlevL <    \fm(\Input)   \leq \CDFlevU   |  \Input, {\Dataset{}})
		= \CDFdelta(\CDFlevL,\Input, {\Dataset{}})-\CDFdelta(\CDFlevU,\Input, {\Dataset{}}) 
		.\nonumber
	\end{align}
\end{corollary}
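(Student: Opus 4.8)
The plan is to obtain this identity directly from Theorem \ref{thm:GP_prob_fm} by expressing the interval probability as a difference of two cumulative distribution values. Since $\CDFlevL < \CDFlevU$, the event $\{\fm(\Input) \leq \CDFlevU\}$ decomposes as the disjoint union of $\{\fm(\Input) \leq \CDFlevL\}$ and $\{\CDFlevL < \fm(\Input) \leq \CDFlevU\}$. Conditioning throughout on $(\Input, {\Dataset{}})$ and invoking finite additivity of the conditional probability measure, I would write
\begin{align}
	\mathrm{Pr}(\CDFlevL < \fm(\Input) \leq \CDFlevU \mid \Input, {\Dataset{}})
	= \mathrm{Pr}(\fm(\Input) \leq \CDFlevU \mid \Input, {\Dataset{}})
	- \mathrm{Pr}(\fm(\Input) \leq \CDFlevL \mid \Input, {\Dataset{}}).
	\nonumber
\end{align}

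Next I would substitute the cumulative distribution function supplied by Theorem \ref{thm:GP_prob_fm} into each term on the right-hand side. Each substitution replaces $\mathrm{Pr}(\fm(\Input) \leq \CDFlev \mid \Input, {\Dataset{}})$ by $1 - \CDFdelta(\CDFlev, \Input, {\Dataset{}})$ for the appropriate threshold $\CDFlev \in \{\CDFlevL, \CDFlevU\}$. The two constant terms equal to $1$ then cancel, leaving precisely $\CDFdelta(\CDFlevL, \Input, {\Dataset{}}) - \CDFdelta(\CDFlevU, \Input, {\Dataset{}})$, which is the asserted identity.

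There is no genuine obstacle here: the statement is an immediate corollary of Theorem \ref{thm:GP_prob_fm}, and the only point requiring a moment of care is matching the inequality directions so that the decomposition of events is simultaneously disjoint and exhaustive. The hypothesis $\CDFlevL < \CDFlevU$ guarantees that the half-open interval is nonempty and that the three events partition correctly; the strictness at the lower endpoint together with the inclusion at the upper endpoint is exactly what makes the telescoping of the two cumulative distribution values exact rather than merely approximate.
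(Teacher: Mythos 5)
Your proof is correct and follows essentially the same route as the paper: both decompose $\mathrm{Pr}(\CDFlevL < \fm(\Input) \leq \CDFlevU \mid \Input, {\Dataset{}})$ into the difference of two cumulative probabilities and substitute the expression $1-\CDFdelta(\CDFlev,\Input,{\Dataset{}})$ from Theorem \ref{thm:GP_prob_fm}, with the constants cancelling. You merely spell out the disjoint-event decomposition that the paper leaves implicit.
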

\begin{proof}
	Theorem \ref{thm:GP_prob_fm} implies 
	$
		\mathrm{Pr}(   \CDFlevL  <   \fm(\Input)   \leq \CDFlevU   |  \Input, {\Dataset{}})
		= 
		\mathrm{Pr}(      \fm(\Input)  \leq \CDFlevU   |  \Input, {\Dataset{}})
		-
		\mathrm{Pr}(      \fm(\Input)  \leq \CDFlevL   |  \Input, {\Dataset{}})
		=
		\CDFdelta(\CDFlevL,\Input, {\Dataset{}})-\CDFdelta(\CDFlevU,\Input, {\Dataset{}}) 
	$.	
	This completes the proof.	
\end{proof}

\subsection{Practical implementation for HGPR}\label{sec_HGPR_implementation}

Theorems \ref{thm:GP_fm} and \ref{thm:GP_prob_fm} necessitate the computation of the expectation $\opE[ \dots |   {\Dataset{}}]= \int (\dots)   \PDF( \lnfv_{1:\NumData}   | {\Dataset{}} ) \mathrm{d} \lnfv_{1:\NumData}$ concerning $\lnfv_{1:\NumData}$ given the dataset $ {\Dataset{}}$.
Because $\PDF( \lnfv_{1:\NumData}   | {\Dataset{}} )$ is not obtained in an analytical form, we propose the use of autonormalized importance sampling \cite{Agapiou2017SS}.

\begin{remark}[{\MyHighlight{Autonormalized importance sampling}}]\label{thm:SNIS}
For any $\unConstPDF>0$,
consider an unnormalized PDF $\unPDF( \lnfv_{1:\NumData}   | {\Dataset{}} ):= \unConstPDF \PDF( \lnfv_{1:\NumData}   | {\Dataset{}} )$ and a continuous proposal PDF $\proposalPDF: \mathbb{R}^{\NumData} \to (0,\infty)$.
For any continuous function $\NotationFunc: \mathbb{R}^{\NumData} \to \mathbb{R}$ and $\NumMC\geq 1$, 
its expectation can be approximated as follows \cite{Agapiou2017SS}:
\begin{align}
	\opE[ \NotationFunc(\lnfv_{1:\NumData}) |   {\Dataset{}}]
	&
	=
	\opQE \Big[
	\NotationFunc(\lnfv_{1:\NumData}) 
	\frac{ \unPDF( \lnfv_{1:\NumData}   | {\Dataset{}} ) }{  \proposalPDF( \lnfv_{1:\NumData}  )  }
	 \Big]
	\Big/
	\opQE \Big[
	\frac{ \unPDF( \lnfv_{1:\NumData}   | {\Dataset{}} ) }{  \proposalPDF( \lnfv_{1:\NumData}  )  }
	\Big]
	\nonumber\\&
	\approx
	\sum_{\IDMC=1}^{\NumMC}
	\NotationFunc(\lnfv_{1:\NumData}^{(\IDMC)}) 
	\frac{ \unPDF( \lnfv_{1:\NumData}^{(\IDMC)}   | {\Dataset{}} ) }{  \proposalPDF( \lnfv_{1:\NumData}^{(\IDMC)}  )  }
	\Big/
	\sum_{\IDMC=1}^{\NumMC}
	\frac{ \unPDF( \lnfv_{1:\NumData}^{(\IDMC)}   | {\Dataset{}} ) }{  \proposalPDF( \lnfv_{1:\NumData}^{(\IDMC)}  )  }
	,\label{eq:concept_SNIS}
\end{align}
where $\opQE[ \dots]= \int (\dots ) \proposalPDF( \lnfv_{1:\NumData}  ) \mathrm{d}\lnfv_{1:\NumData}$ denotes the expectation concerning $\lnfv_{1:\NumData}$ that follows $ \proposalPDF( \lnfv_{1:\NumData}  )$.
Each $\lnfv_{1:\NumData}^{(\IDMC)}$ for $\IDMC\in \{1,2,\dots,\NumMC\}$ is an i.i.d. sample from $\proposalPDF( \lnfv_{1:\NumData} )$.
\end{remark}

A challenge is to design a proposal PDF $\proposalPDF( \lnfv_{1:\NumData} )$ that serves as a proper surrogate for $ \PDF( \lnfv_{1:\NumData}  | {\Dataset{}} )$.
Supposing that the number of observations $\Output_{\IDt,\IDsamp}$ in \eqref{eq:def_output} is greater than 1, that is, $\NumSamp>1$, 
this study proposes the following $\proposalPDF( \lnfv_{1:\NumData} )$:
\begin{align}
	\proposalPDF( \lnfv_{1:\NumData}    )
	=
	\mathcal{N}( \lnfv_{1:\NumData} | \gpmlnfv , 	\gpcovlnfv   )
	,\label{eq:def_proposalPDF}
\end{align}
where the following definitions are utilized, with the $\NumData$-dimensional identify matrix denoted by $\Identity$:
\begin{align}
	\gpmlnfv
	&:=
	\kGramfv_{\NumData}  (\kGramfv_{\NumData}  + \lnfvvar \Identity )^{-1}
	{\slnfv{1:\NumData}}
	,\nonumber
	\\
	\gpcovlnfv
	&:=
	\kGramfv_{\NumData}  -  \kGramfv_{\NumData}^{\MyTop} (\kGramfv_{\NumData}  + \lnfvvar \Identity )^{-1}  \kGramfv_{\NumData} 
	,\nonumber
	\\
	\lnfvvarSQRT
	&:=	
	\sqrt{ {\polygamma{1}}\Big(\frac{\NumSamp-1}{2}\Big) }
	,\label{eq:def_lnfvvar}
	\\
	{\slnfv{\IDt}}
	&:=
	\ln {\OutSampVar{\IDt}}  + \ln(\NumSamp-1) -(\ln 2 )- {\polygamma{0}}\Big(\frac{\NumSamp-1}{2}\Big)
	,\label{eq:def_slnfv}
	\\	
	{\OutSampVar{\IDt}}
	&:=
	\frac{1}{\NumSamp-1}
	\sum_{\IDsamp=1}^{\NumSamp}  \Big(
	{\OutSamp{\IDt}{\IDsamp}}  - {\OutSampMean{\IDt}}
	\Big)^{2}
	.\label{eq:OutSampVar}
\end{align}
Note that 
${\polygamma{0}}(\wildcard)$ and ${\polygamma{1}}(\wildcard)$ are the digamma and trigamma functions, respectively, that can be effectively evaluated using MATLAB.
The proposed $\proposalPDF( \lnfv_{1:\NumData}    )$ in \eqref{eq:def_proposalPDF} exhibits two suitable properties discussed in the following.

First, the density ratio $	{ \unPDF( \lnfv_{1:\NumData}^{(\IDMC)}   | {\Dataset{}} ) }/{  \proposalPDF( \lnfv_{1:\NumData}^{(\IDMC)}  )  }$ in \eqref{eq:concept_SNIS} can be computed using multivariate normal distributions.

\begin{proposition}[{\MyHighlight{First property of the proposed $\proposalPDF( \lnfv_{1:\NumData}    )$}}]\label{thm:SNIS_PDF_ratio}
	Suppose Assumption \ref{ass_GPs} and $\NumSamp>1$.
	By setting $\unConstPDF=\PDF(  {\OutSampMean{1:\NumData}}    |  	\Input_{1:\NumData}  )$, we have
	\begin{align}
		\frac{ \unPDF( \lnfv_{1:\NumData}   | {\Dataset{}} ) }{  \proposalPDF( \lnfv_{1:\NumData}  )  }
		&
		=
		\frac{ 
			\mathcal{N}(	{\OutSampMean{1:\NumData}}	| 0,\NkGramfm(\lnfv_{1:\NumData}) )
			\mathcal{N} (\lnfv_{1:\NumData}  | 0 ,\kGramfv_{\NumData} )				
		}{
			\mathcal{N}( \lnfv_{1:\NumData}  | \gpmlnfv , 	\gpcovlnfv   )
		}	
		.\label{eq:PDFratio_result}
	\end{align}
\end{proposition}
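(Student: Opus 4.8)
The plan is to unwind the definition of the unnormalized posterior and apply Bayes' rule so that the chosen normalizing constant cancels exactly. Since $\unPDF( \lnfv_{1:\NumData} | {\Dataset{}} ) = \unConstPDF \, \PDF( \lnfv_{1:\NumData} | {\Dataset{}} )$ and ${\Dataset{}} = [\Input_{1:\NumData}, \OutSampMean{1:\NumData}]$, I would first expand the posterior by Bayes' rule as
\[
\PDF( \lnfv_{1:\NumData} | {\Dataset{}} )
=
\frac{ \PDF( \OutSampMean{1:\NumData} | \lnfv_{1:\NumData}, \Input_{1:\NumData} ) \, \PDF( \lnfv_{1:\NumData} | \Input_{1:\NumData} ) }{ \PDF( \OutSampMean{1:\NumData} | \Input_{1:\NumData} ) }.
\]
Setting $\unConstPDF = \PDF( \OutSampMean{1:\NumData} | \Input_{1:\NumData} )$ as in the statement then cancels the marginal likelihood in the denominator, leaving $\unPDF( \lnfv_{1:\NumData} | {\Dataset{}} ) = \PDF( \OutSampMean{1:\NumData} | \lnfv_{1:\NumData}, \Input_{1:\NumData} ) \, \PDF( \lnfv_{1:\NumData} | \Input_{1:\NumData} )$. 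It then remains to identify the two factors as the two Gaussian densities in the numerator of \eqref{eq:PDFratio_result}.

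The prior factor is immediate from Assumption \ref{ass_GPs}: since $\lnfv$ follows a GP with kernel $\kerfv$, we have $\PDF( \lnfv_{1:\NumData} | \Input_{1:\NumData} ) = \mathcal{N}( \lnfv_{1:\NumData} | 0, \kGramfv_{\NumData} )$.

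The likelihood factor is the main step, as it requires marginalizing the latent function $\fm$ out of the distribution of the sample means. First I would condition on $\fm$: by \eqref{eq:def_output}--\eqref{eq:def_noise} together with $\fv(\Input_{\IDt})^{2} = \exp \lnfv_{\IDt}$ from \eqref{eq:def_lnfv}, each observation $\OutSamp{\IDt}{\IDsamp}$ obeys $\mathcal{N}( \fm(\Input_{\IDt}), \exp \lnfv_{\IDt} )$ and the $\NumSamp$ samples at $\Input_{\IDt}$ are i.i.d., so the empirical mean $\OutSampMean{\IDt}$ of \eqref{eq:OutSampMean} obeys $\mathcal{N}( \fm(\Input_{\IDt}), \frac{1}{\NumSamp} \exp \lnfv_{\IDt} )$. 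Stacking over $\IDt$, the conditional law of $\OutSampMean{1:\NumData}$ given $\fm$ is normal with mean $[\fm(\Input_{1}), \dots, \fm(\Input_{\NumData})]^{\MyTop}$ and covariance $\frac{1}{\NumSamp}\Zdiag([\exp \lnfv_{1}, \dots, \exp \lnfv_{\NumData}]^{\MyTop})$. Since Assumption \ref{ass_GPs} gives $[\fm(\Input_{1}), \dots, \fm(\Input_{\NumData})]^{\MyTop} \sim \mathcal{N}(0, \kGramfm_{\NumData})$ for every $\lnfv$, and this latent vector is independent of the observation noise, marginalizing $\fm$ convolves two independent zero-mean Gaussians whose covariances add, yielding $\PDF( \OutSampMean{1:\NumData} | \lnfv_{1:\NumData}, \Input_{1:\NumData} ) = \mathcal{N}( \OutSampMean{1:\NumData} | 0, \NkGramfm(\lnfv_{1:\NumData}) )$ by the definition of $\NkGramfm$.

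Combining the two factors gives $\unPDF( \lnfv_{1:\NumData} | {\Dataset{}} ) = \mathcal{N}( \OutSampMean{1:\NumData} | 0, \NkGramfm(\lnfv_{1:\NumData}) ) \, \mathcal{N}( \lnfv_{1:\NumData} | 0, \kGramfv_{\NumData} )$, and dividing by the proposal $\proposalPDF( \lnfv_{1:\NumData} ) = \mathcal{N}( \lnfv_{1:\NumData} | \gpmlnfv, \gpcovlnfv )$ of \eqref{eq:def_proposalPDF} produces \eqref{eq:PDFratio_result}. The only delicate point is the marginalization over $\fm$: I must confirm that $\fm$ is independent of the noise and that conditioning on $\lnfv_{1:\NumData}$ leaves the GP prior of $\fm$ unchanged, both of which follow from the ``for all $\lnfv$'' clause in Assumption \ref{ass_GPs} and the i.i.d. noise model.
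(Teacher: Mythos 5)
Your proof is correct and follows essentially the same route as the paper's: Bayes' theorem with $\unConstPDF=\PDF(\OutSampMean{1:\NumData}|\Input_{1:\NumData})$ cancelling the marginal likelihood, identification of the prior as $\mathcal{N}(\lnfv_{1:\NumData}|0,\kGramfv_{\NumData})$ and the likelihood as $\mathcal{N}(\OutSampMean{1:\NumData}|0,\NkGramfm(\lnfv_{1:\NumData}))$, then division by the proposal \eqref{eq:def_proposalPDF}. The only cosmetic difference is that the paper obtains the likelihood factor by citing its earlier joint Gaussian \eqref{eq:jointPDF} from the proof of Theorem \ref{thm:GP_fm}, whereas you re-derive that marginal inline by conditioning on $\fm$ and convolving the two independent zero-mean Gaussians, which is the same calculation.
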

\begin{proof}
	The proof is described in Appendix \ref{pf:SNIS_PDF_ratio}.
\end{proof}

Second, the proposed $\proposalPDF( \lnfv_{1:\NumData}    )$ serves as a suitable surrogate for $ \PDF( \lnfv_{1:\NumData}  | {\Dataset{}} )$ in the sense of  the following two approximations:
\begin{align}
	\PDF(  \lnfv_{1:\NumData}  | {\Dataset{}}  )
	=
	\PDF(  \lnfv_{1:\NumData}  | 	\Input_{1:\NumData} ,  {\OutSampMean{1:\NumData}}    )
	&\approx
	\PDF(  \lnfv_{1:\NumData}  | \Input_{1:\NumData} , {\OutSampVar{1:\NumData}} 	 )
	\nonumber\\&\approx
	\proposalPDF( \lnfv_{1:\NumData} )
	\label{eq:proposalPDF_approx_overview}
\end{align}
It is anticipated that the first approximation in \eqref{eq:proposalPDF_approx_overview} is practical because  both ${\OutSampMean{1:\NumData}}$ and ${\OutSampVar{1:\NumData}}$ are derived from the given observations $\Output_{\IDt,\IDsamp}$ using \eqref{eq:OutSampMean} and \eqref{eq:OutSampVar}, respectively.
To substantiate the validity of the second approximation in \eqref{eq:proposalPDF_approx_overview}, we present the following theorem.

\begin{theorem}[{\MyHighlight{Second property of the proposed $\proposalPDF( \lnfv_{1:\NumData}    )$}}]\label{thm:PDF_lnfv1}
	Suppose Assumption \ref{ass_GPs} and $\NumSamp>1$.
	We have	
	\begin{align}
		p( \lnfv_{1:\NumData}   | 	\Input_{1:\NumData}, {\OutSampVar{1:\NumData}}  )
		&		\propto
		p( \lnfv_{1:\NumData} | 	\Input_{1:\NumData}  )
		\prod_{\IDt=1}^{\NumData}
		\LCSdist(  {\slnfv{\IDt}}   | \lnfv_{\IDt}, \lnfvvar )
		,\label{eq:pdf_lnfv_given_y_var}		
		\\
		\proposalPDF( \lnfv_{1:\NumData}   )
		&	\propto
		p( \lnfv_{1:\NumData} | 	\Input_{1:\NumData}  )
		\prod_{\IDt=1}^{\NumData}
		\mathcal{N}({\slnfv{\IDt}} | \lnfv_{\IDt}, \lnfvvar  )
		,	\label{eq:cond_proposalPDF}
	\end{align}
	where
	$\LCSdist(  {\slnfv{\IDt}}   | \lnfv_{\IDt}, \lnfvvar )$
	is the logarithmic chi-squared distribution 
	with the mean $\lnfv_{\IDt}$ and variance $\lnfvvar $ in \eqref{eq:def_lnfvvar}.
\end{theorem}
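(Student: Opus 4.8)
The plan is to establish the two displayed relations separately. Relation \eqref{eq:pdf_lnfv_given_y_var} is a Bayes-rule computation driven by the sampling distribution of the sample variance, whereas \eqref{eq:cond_proposalPDF} is a direct application of the standard Gaussian-conditioning identity. For \eqref{eq:pdf_lnfv_given_y_var} I would first apply Bayes' theorem to obtain $p( \lnfv_{1:\NumData} | \Input_{1:\NumData}, {\OutSampVar{1:\NumData}} ) \propto p( {\OutSampVar{1:\NumData}} | \lnfv_{1:\NumData}, \Input_{1:\NumData} ) \, p( \lnfv_{1:\NumData} | \Input_{1:\NumData} )$, where the prior factor is the GP prior $\mathcal{N}(0,\kGramfv_{\NumData})$ supplied by Assumption \ref{ass_GPs}. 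Because the observations $\OutSamp{\IDt}{\IDsamp}$ at distinct inputs are independent given $\lnfv_{1:\NumData}$, the likelihood factorizes as $\prod_{\IDt=1}^{\NumData} p( {\OutSampVar{\IDt}} | \lnfv_{\IDt} )$, and the whole task reduces to identifying this per-input factor.

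The key step is the sampling distribution of ${\OutSampVar{\IDt}}$. By \eqref{eq:def_noise} the $\NumSamp$ observations at $\Input_{\IDt}$ are i.i.d.\ $\mathcal{N}(\fm(\Input_{\IDt}), \exp \lnfv_{\IDt})$, so the classical result gives $\chiSamp{\IDt} := (\NumSamp-1){\OutSampVar{\IDt}}/\exp \lnfv_{\IDt} \sim \chiPDF{\NumSamp-1}$, and crucially this distribution does not depend on the mean $\fm(\Input_{\IDt})$, so marginalizing $\fm(\Input_{\IDt})$ out leaves exactly $p( {\OutSampVar{\IDt}} | \lnfv_{\IDt} )$. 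Rearranging \eqref{eq:def_slnfv} shows it is the affine-in-logarithm map ${\slnfv{\IDt}} = \ln \chiSamp{\IDt} + \lnfv_{\IDt} - \ln 2 - \polygamma{0}((\NumSamp-1)/2)$, which is a bijection between ${\OutSampVar{\IDt}}$ and ${\slnfv{\IDt}}$; hence conditioning on ${\OutSampVar{1:\NumData}}$ is equivalent to conditioning on ${\slnfv{1:\NumData}}$, and the factor becomes a shifted log-chi-squared density for ${\slnfv{\IDt}}$ given $\lnfv_{\IDt}$. It then remains to calibrate its parameters: using the gamma-function log-moment identities $\opE[\ln \chiSamp{\IDt}] = \ln 2 + \polygamma{0}((\NumSamp-1)/2)$ and $\opV[\ln \chiSamp{\IDt}] = \polygamma{1}((\NumSamp-1)/2)$, the deterministic shift in \eqref{eq:def_slnfv} is chosen precisely so that $\opE[{\slnfv{\IDt}}|\lnfv_{\IDt}] = \lnfv_{\IDt}$ and $\opV[{\slnfv{\IDt}}|\lnfv_{\IDt}] = \polygamma{1}((\NumSamp-1)/2) = \lnfvvar$ by \eqref{eq:def_lnfvvar}. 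This identifies the factor as $\LCSdist( {\slnfv{\IDt}} | \lnfv_{\IDt}, \lnfvvar )$ and yields \eqref{eq:pdf_lnfv_given_y_var}.

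For \eqref{eq:cond_proposalPDF} I would recognize $\proposalPDF(\lnfv_{1:\NumData}) = \mathcal{N}(\lnfv_{1:\NumData} | \gpmlnfv, \gpcovlnfv)$ as the exact Gaussian posterior of the surrogate model with prior $\lnfv_{1:\NumData}|\Input_{1:\NumData} \sim \mathcal{N}(0,\kGramfv_{\NumData})$ and pseudo-observation ${\slnfv{1:\NumData}} | \lnfv_{1:\NumData} \sim \mathcal{N}(\lnfv_{1:\NumData}, \lnfvvar \Identity)$. Substituting the symmetric $\kGramfv_{\NumData}$ and noise variance $\lnfvvar$ into the standard posterior formulas $\kGramfv_{\NumData}(\kGramfv_{\NumData}+\lnfvvar\Identity)^{-1}{\slnfv{1:\NumData}}$ and $\kGramfv_{\NumData} - \kGramfv_{\NumData}(\kGramfv_{\NumData}+\lnfvvar\Identity)^{-1}\kGramfv_{\NumData}$ reproduces $\gpmlnfv$ and $\gpcovlnfv$ of \eqref{eq:def_proposalPDF}. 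The Gaussian-conditioning identity then gives $\mathcal{N}(\lnfv_{1:\NumData} | \gpmlnfv, \gpcovlnfv) \propto \mathcal{N}({\slnfv{1:\NumData}}|\lnfv_{1:\NumData}, \lnfvvar\Identity)\, \mathcal{N}(\lnfv_{1:\NumData}|0, \kGramfv_{\NumData})$ as functions of $\lnfv_{1:\NumData}$; factoring the diagonal Gaussian as $\prod_{\IDt=1}^{\NumData}\mathcal{N}({\slnfv{\IDt}}|\lnfv_{\IDt}, \lnfvvar)$ and identifying $\mathcal{N}(\lnfv_{1:\NumData}|0,\kGramfv_{\NumData}) = p(\lnfv_{1:\NumData}|\Input_{1:\NumData})$ via Assumption \ref{ass_GPs} delivers \eqref{eq:cond_proposalPDF}.

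I expect the main obstacle to lie entirely in the first relation: correctly deriving the log-chi-squared sampling law of ${\slnfv{\IDt}}$ and verifying, through the digamma and trigamma identities for the log-moments of a chi-squared variable, that the constant shift in \eqref{eq:def_slnfv} calibrates the mean to $\lnfv_{\IDt}$ and the variance to $\lnfvvar$. The second relation is a routine Gaussian-conditioning argument once the surrogate model behind $\proposalPDF$ is made explicit.
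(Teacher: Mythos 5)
Your proof of \eqref{eq:pdf_lnfv_given_y_var} is correct and follows essentially the paper's own argument: Bayes' theorem, Cochran's theorem giving $\chiSamp{\IDt}=(\NumSamp-1)\OutSampVar{\IDt}/\exp\lnfv_{\IDt}\sim\chiPDF{\NumSamp-1}$ with the mean $\fm(\Input_{\IDt})$ cancelling, the data-dependent bijection between $\OutSampVar{\IDt}$ and $\slnfv{\IDt}$ (the paper phrases this as a change of variables whose Jacobian depends only on the data and is therefore absorbed into the proportionality), and the digamma/trigamma calibration showing $\opE[\slnfv{\IDt}|\lnfv_{\IDt}]=\lnfv_{\IDt}$ and $\opV[\slnfv{\IDt}|\lnfv_{\IDt}]=\lnfvvar$ via \eqref{eq:def_slnfv} and \eqref{eq:def_lnfvvar}. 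One imprecision to repair: your stated justification for the factorized likelihood, namely that the observations at distinct inputs are independent given $\lnfv_{1:\NumData}$, is false as written, since the raw outputs are correlated across inputs through the GP prior on $\fm$. What is true, and what your own cancellation remark already supplies, is that each $\OutSampVar{\IDt}$ is a function of the noise block $\{\Noise_{\IDt,\IDsamp}\}_{\IDsamp}$ alone (the term $\fm(\Input_{\IDt})$ drops out of the deviations), and these blocks are mutually independent; hence the sample variances factorize even though the observations do not.

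For \eqref{eq:cond_proposalPDF} you take a genuinely cleaner route than the paper. You prove the proportionality directly from standard Gaussian conjugacy, viewing $\slnfv{1:\NumData}\,|\,\lnfv_{1:\NumData}\sim\mathcal{N}(\lnfv_{1:\NumData},\lnfvvar\Identity)$ as a pseudo-observation model whose exact posterior reproduces $\gpmlnfv$ and $\gpcovlnfv$. The paper instead validates a joint density $p(\slnfv{1:\NumData},\lnfv_{1:\NumData}|\Input_{1:\NumData})$ equal to a $2\NumData$-dimensional Gaussian multiplied by the correction ratio $\PDFratio$ (established by a Schur-complement positivity check and a contradiction argument), applies Gaussian conditioning, and divides the ratio back out via Bayes' theorem. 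The two arguments are mathematically equivalent, and yours is shorter because the identity $\mathcal{N}(\lnfv_{1:\NumData}|\gpmlnfv,\gpcovlnfv)\propto\mathcal{N}(\lnfv_{1:\NumData}|0,\kGramfv_{\NumData})\prod_{\IDt=1}^{\NumData}\mathcal{N}(\slnfv{\IDt}|\lnfv_{\IDt},\lnfvvar)$, with the constant depending only on $\slnfv{1:\NumData}$, is a purely algebraic fact about Gaussian densities (completing the square), requiring only $\kGramfv_{\NumData}\succ 0$ and $\lnfvvar>0$. You should state explicitly that this identity does not require the surrogate model to be the true law --- the actual conditional of $\slnfv{\IDt}$ given $\lnfv_{\IDt}$ is log-chi-squared, not normal, which is precisely the discrepancy the paper's ratio $\PDFratio$ tracks. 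With that one sentence added, your argument is complete and, if anything, more transparent than the paper's.
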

\begin{proof}
	The proof is described in Appendix \ref{pf:PDF_lnfv1}.
\end{proof}

\begin{remark}[{\MyHighlight{Contributions of Theorem \ref{thm:PDF_lnfv1}}}]
Theorem \ref{thm:PDF_lnfv1} demonstrates that
approximating $p( \lnfv_{1:\NumData}   | 	\Input_{1:\NumData}, {\OutSampVar{1:\NumData}}  )$ by $\proposalPDF( \lnfv_{1:\NumData}   )$ reduces to approximating $\LCSdist(  {\slnfv{\IDt}}   | \lnfv_{\IDt}, \lnfvvar )$ by $\mathcal{N}({\slnfv{\IDt}} | \lnfv_{\IDt}, \lnfvvar  )$, where both $\LCSdist(  {\slnfv{\IDt}}   | \lnfv_{\IDt}, \lnfvvar )$ and $\mathcal{N}({\slnfv{\IDt}} | \lnfv_{\IDt}, \lnfvvar  )$ share the same mean $ \lnfv_{\IDt}$ and variance $\lnfvvar$.
In the sense of the moment matching, $\proposalPDF( \lnfv_{1:\NumData}   )$ is anticipated to serve as a suitable surrogate for $\PDF(  \lnfv_{1:\NumData}  | {\Dataset{}}  )$.
\end{remark}

\section{Control application with demonstration}\label{sec_sim}

\subsection{Chance-constrained tracking control using HGPR}\label{sec_CCtracking}

This study demonstrates tracking control via the proposed HGPR, substantiated by the derived theorems.
Recalling Notation {\notationEl}, let us introduce the two dimensional reference signal ${\sysrV{\MyT}}=[ {\sysr{\MyT}{1}}, {\sysr{\MyT}{2}} ]^{\MyTop} \in \mathbb{R}^{2}$ at $\MyT\in \{0,1,2,\dots\}$ defined by
\begin{align}
	{\sysrV{\MyT+1}}
	&= 
	{\sysrV{\MyT}}
	+
	\MydT
	\begin{bmatrix}
		{\sysr{\MyT}{2}} \\
		{\sysv{\MyT}}
	\end{bmatrix} 
	,
\end{align}
where $\MydT$ is a discrete time period and ${\sysv{\MyT}}$ is a known signal. We control the following partially unknown nonlinear system with the state ${\sysxV{\MyT}}=[ {\sysx{\MyT}{1}}, {\sysx{\MyT}{2}} ]^{\MyTop} \in \mathbb{R}^{2}$ at $\MyT$:
\begin{align}
	{\sysxV{\MyT+1}}
	&= 
	{\sysxV{\MyT}}
	+
	\MydT
	\begin{bmatrix}
		{\sysx{\MyT}{2}} \\
		 \fm({\sysxV{\MyT}})
		+ 	{\sysPreu{\MyT}} 
		  +  {\sysu{\MyT}}
	\end{bmatrix} 
	,\label{eq:def_sys_sim}
	\\
	\sysPreu{\MyT}		&={\sysv{\MyT}}	 - ( {\sysx{\MyT}{1}} - {\sysr{\MyT}{1}} )/\MydT
	,\nonumber
\end{align}
where $\fm({\sysxV{\MyT}})$ denotes an unknown disturbance and $\sysPreu{\MyT}$ is the predefined simple feedforward-feedback controller. 
The unknown function $\fm$ is identified using the proposed HGPR, given a training dataset ${\Dataset{}}$, details of which are provided in the subsequent subsection.
The additional control input $ {\sysu{\MyT}}$ is designed below.

We address the chance-constrained sparse tracking control problem with minimizing the $L1$ norm of the control input sequence:
\begin{align}
&\min_{ {\sysu{\wildcard}} }
\sum_{\MyT=0}^{\Thorizon-1}
\big|{\sysu{\MyT}} \big|
\quad 
\mathrm{s.t.}
\nonumber\\
&
\forall \MyT, \;
\mathrm{Pr}
\Big(
 | {\sysx{\MyT+1}{2}} - {\sysr{\MyT+1}{2}} | \leq \sysrMargin
\Big| {\sysxV{\MyT}} ,  {\Dataset{}}
\Big)\geq 1 - \CDFdeltaSpecial,
\label{eq:CCL1_problem}
\end{align}
where $\sysrMargin$, $\CDFdeltaSpecial$, and $\Thorizon$ are predefined parameters.
While this problem is difficult to solve unfortunately, this study proposes the following controller to achieve control sparsity while simultaneously satisfying the chance constraints.

\begin{theorem}[{\MyHighlight{Chance-constrained sparse control}}]\label{thm:L1_control} 
Given  $\MyT$ and ${\sysxV{\MyT}}$, suppose Assumption \ref{ass_GPs} holds,
that is, $\fm$ and $\lnfv$ obey GPs independent of $(\MyT, {\sysxV{\MyT}})$.
For the given $(\MyT, {\sysxV{\MyT}})$ and a given ${\Dataset{}}$,
the chance constraint for $\MyT$ in \eqref{eq:CCL1_problem} is satisfied if we have 
\begin{align}
	{\sysu{\MyT}}
	&=
	\begin{cases}
	\sparseControllerU & (\sparseControllerU<0)
	\\
	\sparseControllerL & (\sparseControllerL>0)
	\\
	0 & (\mathrm{otherwise})	
	\end{cases}
	,\label{eq:def_L1_controller}
	\\
	\sparseControllerL &\leq \sparseControllerU
	,\label{eq:def_L1_controller_cond}
\end{align}
where
\begin{align}
	\sparseControllerU
	&:=
	-{\CDFlevSpecialU}
	+ (\sysrMargin/\MydT)
	+ \CDFlevSpecialTerm
	,\nonumber\\
	\sparseControllerL
	&:=
	-{\CDFlevSpecialL}
	- (\sysrMargin/\MydT)
	+ \CDFlevSpecialTerm
	,\nonumber
	\\
	\CDFlevSpecialTerm
	&:=
	- {\sysPreu{\MyT}}
	+
	(  {\sysr{\MyT+1}{2}}
	-{\sysx{\MyT}{2}}
	)/\MydT
	,\nonumber
\end{align}	
and
${\CDFlevSpecialU}$ and ${\CDFlevSpecialL}$ are the unique scalars satisfying $\CDFdelta( {\CDFlevSpecialU} ,{\sysxV{\MyT}}, {\Dataset{}}) = \CDFdeltaSpecial/2$ and $\CDFdelta( {\CDFlevSpecialL} ,{\sysxV{\MyT}}, {\Dataset{}}) =1 - \CDFdeltaSpecial/2$.

\end{theorem}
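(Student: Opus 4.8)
The plan is to reduce the chance constraint in \eqref{eq:CCL1_problem} to a deterministic interval condition on $\sysu{\MyT}$, and then to verify that the piecewise controller \eqref{eq:def_L1_controller} always satisfies that condition. First I would compute the second coordinate of the tracking error using the dynamics \eqref{eq:def_sys_sim}. Every term other than $\fm(\sysxV{\MyT})$ is deterministic given $(\MyT, \sysxV{\MyT}, \Dataset{})$, and the definition $\CDFlevSpecialTerm = -\sysPreu{\MyT} + (\sysr{\MyT+1}{2} - \sysx{\MyT}{2})/\MydT$ is precisely what collects them, since it gives the identity $\sysx{\MyT}{2} + \MydT\sysPreu{\MyT} - \sysr{\MyT+1}{2} = -\MydT\CDFlevSpecialTerm$. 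Substituting yields
\begin{align}
\sysx{\MyT+1}{2} - \sysr{\MyT+1}{2}
&= \MydT( \fm(\sysxV{\MyT}) + \sysu{\MyT} - \CDFlevSpecialTerm )
.\nonumber
\end{align}
As $\MydT>0$, the event $|\sysx{\MyT+1}{2} - \sysr{\MyT+1}{2}| \leq \sysrMargin$ is equivalent to the two-sided bound $\CDFlevSpecialTerm - \sysu{\MyT} - \sysrMargin/\MydT \leq \fm(\sysxV{\MyT}) \leq \CDFlevSpecialTerm - \sysu{\MyT} + \sysrMargin/\MydT$, whose only randomness is the posterior of $\fm$ at the fixed query point $\Input = \sysxV{\MyT}$ analyzed in Theorem \ref{thm:GP_prob_fm}.

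Next I would invoke Theorem \ref{thm:GP_prob_fm} and Corollary \ref{thm:GP_prob_fm_special}. By Theorem \ref{thm:GP_prob_fm}, $\CDFdelta(\CDFlev, \sysxV{\MyT}, \Dataset{}) = \mathrm{Pr}(\fm(\sysxV{\MyT}) > \CDFlev \mid \sysxV{\MyT}, \Dataset{})$, and the posterior of $\fm(\sysxV{\MyT})$ is a mixture (over $\lnfv_{1:\NumData}$) of Gaussians, each with strictly positive variance $\kerfm(\Input,\Input) - \kvecfm_{\NumData}(\Input)^{\MyTop}\NkGramfm(\lnfv_{1:\NumData})^{-1}\kvecfm_{\NumData}(\Input)$ under Assumption \ref{ass_GPs}. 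Hence $\CDFdelta(\cdot, \sysxV{\MyT}, \Dataset{})$ is continuous and strictly decreasing from $1$ to $0$, so the scalars $\CDFlevSpecialU, \CDFlevSpecialL$ exist, are unique, and satisfy $\CDFlevSpecialL < \CDFlevSpecialU$ because $\CDFdeltaSpecial/2 < 1 - \CDFdeltaSpecial/2$. Corollary \ref{thm:GP_prob_fm_special} then gives
\begin{align}
\mathrm{Pr}(\CDFlevSpecialL < \fm(\sysxV{\MyT}) \leq \CDFlevSpecialU \mid \sysxV{\MyT}, \Dataset{})
&= (1 - \CDFdeltaSpecial/2) - \CDFdeltaSpecial/2 = 1 - \CDFdeltaSpecial
,\nonumber
\end{align}
so $(\CDFlevSpecialL, \CDFlevSpecialU]$ is a $(1-\CDFdeltaSpecial)$-credible interval for $\fm(\sysxV{\MyT})$.

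I would then show $\sparseControllerL \leq \sysu{\MyT} \leq \sparseControllerU$ is sufficient for the chance constraint. Unwinding the definitions, $\sysu{\MyT} \geq \sparseControllerL$ is equivalent to $\CDFlevSpecialTerm - \sysu{\MyT} - \sysrMargin/\MydT \leq \CDFlevSpecialL$ and $\sysu{\MyT} \leq \sparseControllerU$ to $\CDFlevSpecialU \leq \CDFlevSpecialTerm - \sysu{\MyT} + \sysrMargin/\MydT$; together they say the admissible $\fm$-interval contains $(\CDFlevSpecialL, \CDFlevSpecialU]$, so by monotonicity of probability its probability is at least $1-\CDFdeltaSpecial$. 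Finally, using the hypothesis $\sparseControllerL \leq \sparseControllerU$ from \eqref{eq:def_L1_controller_cond}, I would check the three branches of \eqref{eq:def_L1_controller}: if $\sparseControllerU<0$ then $\sysu{\MyT}=\sparseControllerU \in [\sparseControllerL,\sparseControllerU]$; if $\sparseControllerL>0$ then $\sysu{\MyT}=\sparseControllerL \in [\sparseControllerL,\sparseControllerU]$; otherwise $\sparseControllerL \leq 0 \leq \sparseControllerU$ so $\sysu{\MyT}=0 \in [\sparseControllerL,\sparseControllerU]$. Thus $\sparseControllerL \leq \sysu{\MyT} \leq \sparseControllerU$ always holds, establishing the claim. (The branch selection picks the element of $[\sparseControllerL,\sparseControllerU]$ nearest $0$, which drives the $L1$ sparsity, though only constraint satisfaction is asserted.)

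The main care points are the sign bookkeeping in the first step and, above all, the sign reversals in the third: increasing $\sysu{\MyT}$ shifts the admissible $\fm$-interval uniformly to the left, so solving the two endpoint inequalities for $\sysu{\MyT}$ flips their directions, which is exactly why $\sysu{\MyT}\geq\sparseControllerL$ and $\sysu{\MyT}\leq\sparseControllerU$ (rather than the reverse) are correct. One must also confirm that $\CDFdelta(\cdot,\sysxV{\MyT},\Dataset{})$ is continuous and strictly monotone so that $\CDFlevSpecialU,\CDFlevSpecialL$ are well defined and ordered; this rests on the strict positive definiteness of $\kerfm$ in Assumption \ref{ass_GPs}, which makes each Gaussian mixture component non-degenerate.
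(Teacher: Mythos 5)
Your proposal is correct and follows essentially the same route as the paper's proof in Appendix \ref{pf:L1_control}: rewrite the tracking error via \eqref{eq:def_sys_sim} into a two-sided bound on $\fm({\sysxV{\MyT}})$ with the shift $\CDFlevSpecialTerm$, apply Corollary \ref{thm:GP_prob_fm_special} with the quantiles $\CDFlevSpecialL, \CDFlevSpecialU$, translate the endpoint inequalities into $\sparseControllerL \leq {\sysu{\MyT}} \leq \sparseControllerU$, and verify the three branches of \eqref{eq:def_L1_controller}. Your phrasing via interval containment and monotonicity of probability is equivalent to the paper's chain of $\CDFdelta$ inequalities, and your added justification that $\CDFdelta(\cdot,{\sysxV{\MyT}},{\Dataset{}})$ is continuous and strictly decreasing (so that $\CDFlevSpecialU,\CDFlevSpecialL$ exist, are unique, and are ordered) is a welcome supplement the paper leaves implicit.
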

\begin{proof}
The proof is described in Appendix \ref{pf:L1_control}.
\end{proof}
\begin{remark}[{\MyHighlight{Contributions of Theorem \ref{thm:L1_control}}}]
Theorem \ref{thm:L1_control} ensures the chance constraints in \eqref{eq:CCL1_problem} despite the presence of an unknown disturbance $\fm$ under Assumption \ref{ass_GPs}.
The controller \eqref{eq:def_L1_controller} can be sparse because $	{\sysu{\MyT}}=0$ holds as long as $\sparseControllerL\leq 0$ and $\sparseControllerU\geq 0$.
In other words, $\sparseControllerL$ and $\sparseControllerU$ indicate permissible lower and upper bounds of ${\sysu{\MyT}}$ such that the chance constraints are satisfied.	
Note that ${\CDFlevSpecialU}$ and ${\CDFlevSpecialL}$ can be determined by solving $\CDFdelta( {\CDFlevSpecialU} ,{\sysxV{\MyT}}, {\Dataset{}}) = \CDFdeltaSpecial/2$ and $\CDFdelta( {\CDFlevSpecialL} ,{\sysxV{\MyT}}, {\Dataset{}}) =1 - \CDFdeltaSpecial/2$, respectively, using a bisection method.
\end{remark}

\subsection{Simulation setting}\label{sec_sim_setting}

To identify the unknown disturbance $\fm$ in \eqref{eq:def_sys_sim} under heteroscedastic noise associated with $\fv$ as shown in \eqref{eq:def_noise}, we prepare the training dataset as follows.
The inputs $ \Input_{1:\NumData}$ are set to points regularly arrayed on $[-1,1]\times [-1,1]$ with $\NumData=100$.
The number  of observations $\Output_{\IDt,\IDsamp}$ in \eqref{eq:def_output} at each input  $ \Input_{\IDt}$ is set to $\NumSamp=2$.
We consider the following unknown functions:
\begin{align}
	\fm(\Input)&=-10  \sin(\pi {\El{\Input}{1}} )  - 10  \sin(2\pi {\El{\Input}{2}} )  
	,\nonumber\\
	\fv(\Input)^{2}&=0.1+{1.5}/({1+\exp(-10{\El{\Input}{2}} )})
	.\nonumber
\end{align}
In the proposed HGPR, we set $\kerfm(\Input_{\IDEl},\Input_{\IDbEl})$ associated with $\fm$ as the squared exponential kernel:
\begin{align}
\kerfm(\Input_{\IDEl},\Input_{\IDbEl})
=\hypMkerfm \exp\Big(
-\frac{1}{2}(\Input_{\IDEl}-\Input_{\IDbEl})^{\top} 
\begin{bmatrix}
{\hypSkerfm{1}}& 0\\
0 & {\hypSkerfm{2}}	
\end{bmatrix}
(\Input_{\IDEl}-\Input_{\IDbEl})
\Big) 
,\nonumber
\end{align}
where the hyperparameter $(\hypMkerfm,{\hypSkerfm{1}},{\hypSkerfm{2}} )=(
407,
1.37,
5.55
)$ was determined by maximizing a logarithmic marginal likelihood using the \textit{fminunc} command in MATLAB.
The kernel $\kerfv(\Input_{\IDEl},\Input_{\IDbEl})$  associated with $\lnfv=\ln \fv^{2}$ is identical to $\kerfm(\Input_{\IDEl},\Input_{\IDbEl})$ with the hyperparameter set to $(
2.14,
0.0241,
1.86
)$.
The other parameters are set as follows: 
$\MydT=0.005$, 
$\sysrMargin:=0.1$, 
$\CDFdeltaSpecial:=0.01$, 
$\Thorizon=500$,
$\NumMC=1000$,
${\sysxV{0}}={\sysrV{0}}=[0,0]^{\MyTop}$,
and ${\sysv{\MyT}}=3 \cos (\pi \MydT \MyT)$.

We utilize the autonormalized importance sampling \eqref{eq:concept_SNIS} in solving $\CDFdelta( {\CDFlevSpecialU} ,{\sysxV{\MyT}}, {\Dataset{}}) = \CDFdeltaSpecial/2$ and $\CDFdelta( {\CDFlevSpecialL} ,{\sysxV{\MyT}}, {\Dataset{}}) =1 - \CDFdeltaSpecial/2$,  resulting in ${\CDFlevSpecialU}$ and ${\CDFlevSpecialL}$ in Theorem \ref{thm:L1_control}.
Here, 
$\kGramfv_{\NumData}$ and $\gpcovlnfv$ are replaced with 
$\kGramfv_{\NumData}+\JitterVal\Identity$ and 
$\gpcovlnfv+\JitterVal\Identity$ with $\JitterVal=1.0 \times 10^{-12}$, respectively, 
to compute the density ratio \eqref{eq:PDFratio_result}, thereby enhancing numerical stability.

\subsection{Simulation results}\label{sec_sim_results}
\newcommand*{\DirSimResults}{./fig/results20240913}

We compare the  controller proposed in \eqref{eq:def_L1_controller} with the baseline feedback controller
${\sysu{\MyT}}
=-\FBgain (  {\sysx{\MyT}{2}}-{\sysr{\MyT}{2}})/\MydT$, using three types of gains $\FBgain \in \{1,0.5,0.1\}$.
Table \ref{tab:results} shows the values of the control costs  $\sum_{\MyT=0}^{\Thorizon-1} |{\sysu{\MyT}} |$ in \eqref{eq:CCL1_problem} and the violation count by applying the proposed and baseline controllers.
The violation count represents the number of time steps $\MyT \in \{0,1,\dots,\Thorizon - 1\}$ at which the constraint $ | {\sysx{\MyT}{2}} - {\sysr{\MyT}{2}} | \leq \sysrMargin$ in \eqref{eq:CCL1_problem} is not satisfied.
The results indicate that the proposed method outperforms  the baseline controllers.
Figure \ref{fig:control} illustrates the outcomes obtained by applying the proposed method.
Notably, the constraint is nearly satisfied even under the unknown disturbance.
The sequence of the  control inputs is sparse, leading to a reduction in the control cost to the sparse controller proposed in Theorem \ref{thm:L1_control}.

\begin{table}
	\centering
	\renewcommand{\arraystretch}{1.1}	
	\caption{Control costs and violation counts}
	\label{tab:results}
	\begin{tabular}{|c | c|c |c |c |}
		\hline
		
		 \multirow{2}{*}{} & \multirow{2}{*}{\shortstack{ \textbf{Proposed}\\\textbf{controller}}}  & \multicolumn{3}{|c|}{Baseline feedback controllers}   \\		
		\cline{3-5}		
		 &  & { $\FBgain=1$ } &	{ $\FBgain=0.5$ } & {  $\FBgain=0.1$}   \\		\hline
		{Control cost} & \textbf{2953} &{3850}  &{3468} & {1598} 
		\\		\hline
		{Violation count} & \textbf{34} &{0}  &{116} & {349} 
		\\		\hline
	\end{tabular}
	\vspace*{+0.01in}	
\end{table}

\begin{figure}[!t]	
	\vspace*{+0.07in}

		\centering	
		\begin{tikzpicture}[scale=1.0]
			\begin{axis}[axis y line=none, axis x line=none
				,xtick=\empty, ytick=\empty 
				,xmin=-10,xmax=10,ymin=-10,ymax=10
				,width=4 in,height=2.1 in
				]

				\node at (0,0) { 
					\includegraphics[width=0.6\linewidth]{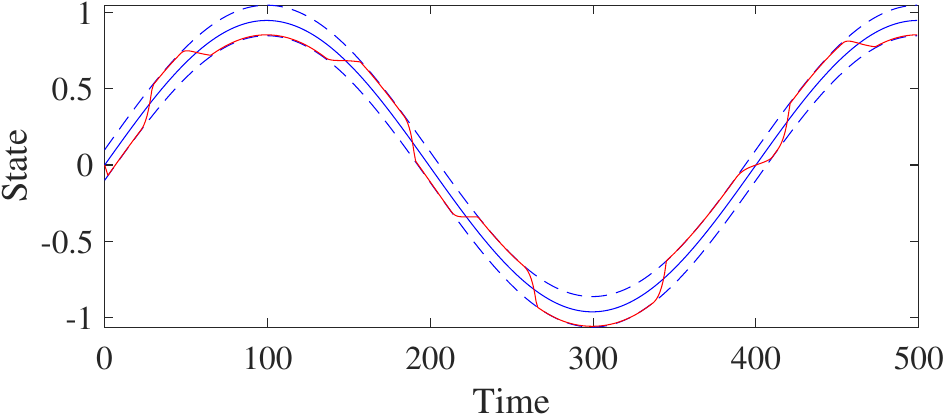}  
				};
				
				\fill[fill=white] (-11,-6) rectangle (-8.8,6);
				\node at (-9.4,2.0) {\rotatebox{90}{	\footnotesize{State ${\sysx{\MyT}{2}}$}	}};
				\fill[fill=white] (-6,-7.8) rectangle (6,-11);
				\node at (0,-9.2) {\footnotesize{Time Step $t$}};		 
			\end{axis}			
		\end{tikzpicture}
		
		\centering	
		\begin{tikzpicture}[scale=1.0]%
			\begin{axis}[axis y line=none, axis x line=none
				,xtick=\empty, ytick=\empty 
				,xmin=-10,xmax=10,ymin=-10,ymax=10
				,width=4 in,height=2.1 in
				]

				\node at (0,0) { 
					\includegraphics[width=0.6\linewidth]{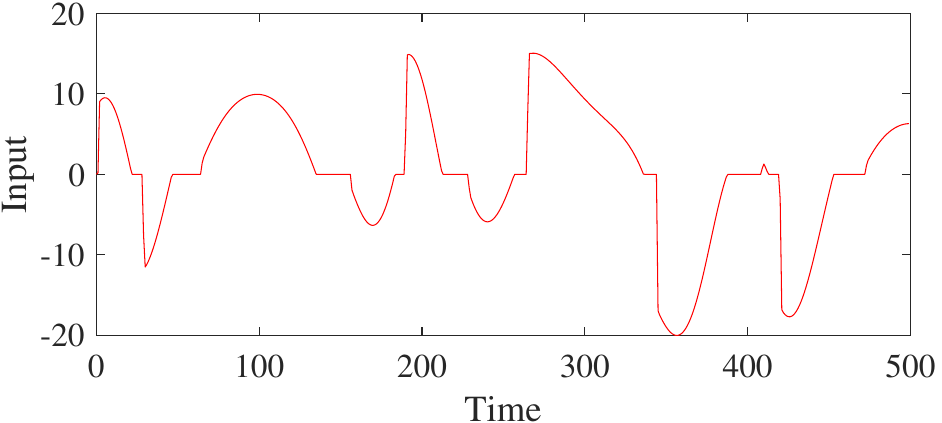}  
				};

				\fill[fill=white] (-11,-6) rectangle (-8.8,6);
				\node at (-9.4,2.0) {\rotatebox{90}{	\footnotesize{Input ${\sysu{\MyT}}$}	}};
				\fill[fill=white] (-6,-7.8) rectangle (6,-11);
				\node at (0,-9.2) {\footnotesize{Time Step $t$}};		 
			\end{axis}			
		\end{tikzpicture}
		
		\caption{Trajectories of the state and input by the proposed method.
		The blue solid and dashed lines denote the reference signal $ {\sysr{\MyT}{2}}$ and $ {\sysr{\MyT}{2}} \pm \sysrMargin$, respectively.
		The red lines in the upper and lower figures indicate the state $ {\sysx{\MyT}{2}}$ and control input ${\sysu{\MyT}}$, respectively.}
		\label{fig:control}
	\end{figure}

\section{Conclusion}\label{sec_conclusion}

This study presented novel theoretical analyses and the implementation of HGPR.
Under assumptions of HGPR, we derived the posterior mean, variance, and cumulative distribution of a target function.
Despite the computational challenges of the derived expectations, we proposed a sampling-based method for their calculation.
Furthermore, the proposed method was applied to a chance-constrained tracking control problem.
The effectiveness of the proposed controller was confirmed via a numerical simulation.

\appendix

\section{Proof of Theorem \ref{thm:GP_fm}} \label{pf:GP_fm}

Initially, we demonstrate that for every $\Input \in \DomX$,
\begin{align}
	\begin{bmatrix}
		\NkGramfm(\lnfv_{1:\NumData}) 
		&  \kvecfm_{\NumData}(\Input)
		\\	\kvecfm_{\NumData}(\Input)^{\MyTop} 	 & \kerfm(\Input,\Input) 
	\end{bmatrix}
	\succ 0
	,\label{eq:pd_cov_fm}
\end{align}
where this holds clearly for $\Input \in \DomX\setminus \{ \Input_{\IDt} | \IDt \in \{ 1,2,\dots, \NumData \} \}$.
Consider the case of   $\Input \in  \{ \Input_{\IDt} | \IDt \in \{ 1,2,\dots, \NumData \} \}$.
Because Assumption \ref{ass_GPs} implies $	\kGramfm_{\NumData} \succ 0$ and
$	\NkGramfm(\lnfv_{1:\NumData}) 
	-  \kGramfm_{\NumData}^{\MyTop}  \kGramfm_{\NumData}^{-1} \kGramfm_{\NumData} 
	\succ 
	0
$, using Schur complement indicates that for any $\IDt$, for some $\pdTransMat\in\{0,1\}^{ 2 \NumData \times (\NumData+1) }$, we obtain
\begin{align}
	\begin{bmatrix}
		\NkGramfm(\lnfv_{1:\NumData}) 
		&  \kvecfm_{\NumData}(\Input_{\IDt} )
		\\	\kvecfm_{\NumData}(\Input_{\IDt} )^{\MyTop} 	 & \kerfm(\Input_{\IDt} ,\Input_{\IDt} ) 
	\end{bmatrix}
	=
	\pdTransMat^{\MyTop}
	\begin{bmatrix}
		\NkGramfm(\lnfv_{1:\NumData}) 
		&  \kGramfm_{\NumData}
		\\	\kGramfm_{\NumData}	 & \kGramfm_{\NumData}
	\end{bmatrix}
	\pdTransMat
	\succ 0
	.\nonumber
\end{align}
Thus, we have \eqref{eq:pd_cov_fm} for every  $\Input \in \DomX$.

Next, using \eqref{eq:def_noise} and \eqref{eq:def_lnfv} yields
	$
	({1}/{\NumSamp})\sum_{\IDsamp=1}^{\NumSamp} 	\Noise_{\IDt,\IDsamp} 
	| \lnfv_{\IDt} 
	\sim \mathcal{N} (0, 	({1}/{\NumSamp}) \exp \lnfv_{\IDt} )$.
Assumption \ref{ass_GPs} with \eqref{eq:pd_cov_fm} implies
\begin{align}
	\begin{bmatrix}
		{\OutSampMean{1:\NumData}}
		\\
		\fm(\Input)
	\end{bmatrix}
	\Bigg| \Input,\Input_{1:\NumData}, \lnfv_{1:\NumData} 
	&\sim
	\mathcal{N} \Bigg( 0 ,
	\begin{bmatrix}
		\NkGramfm(\lnfv_{1:\NumData}) 
		&  \kvecfm_{\NumData}(\Input)
		\\	\kvecfm_{\NumData}(\Input)^{\MyTop} 	 & \kerfm(\Input,\Input) 
	\end{bmatrix}
	\Bigg)
	,\label{eq:jointPDF}
\end{align}
because 
${\OutSampMean{\IDt}}=\fm(\Input_{\IDt})+ (1/\NumSamp)\sum_{\IDsamp=1}^{\NumSamp} 	\Noise_{\IDt,\IDsamp}$ holds
and
$\fm$ is independent of $\lnfv_{1:\NumData} $ and   $\sum_{\IDsamp=1}^{\NumSamp} 	\Noise_{\IDt,\IDsamp}$.
Utilizing \eqref{eq:jointPDF}, we derive the subsequent relations based on \cite{Rasmussen06}:
\begin{align}
	\opE[\fm(\Input) | \Input, \lnfv_{1:\NumData}, {\Dataset{}}  ]
	&=
	\kvecfm_{\NumData}(\Input)^{\MyTop}  \NkGramfm(\lnfv_{1:\NumData})^{-1}   {\OutSampMean{1:\NumData}}
	,\label{eq:gp_mean_given_h}
	\\
	\opV[\fm(\Input) | \Input, \lnfv_{1:\NumData}, {\Dataset{}}  ]
	&=
	\kerfm(\Input,\Input) -  \kvecfm_{\NumData}(\Input)^{\MyTop} \NkGramfm(\lnfv_{1:\NumData})^{-1}   \kvecfm_{\NumData}(\Input)
	.\label{eq:gp_var_given_h}
\end{align}	
We derive
\begin{align}
	\opE[\fm(\Input) |  \Input, {\Dataset{}}  ]
	&=
	\opE[ \opE[\fm(\Input) |\Input,  \lnfv_{1:\NumData}, {\Dataset{}}  ] | \Input,  {\Dataset{}}   ]
	\nonumber\\&
	=
	\opE[ \kvecfm_{\NumData}(\Input)^{\MyTop}  \NkGramfm(\lnfv_{1:\NumData})^{-1}   {\OutSampMean{1:\NumData}}   |   \Input,{\Dataset{}}   ]
	\nonumber\\&
	=
	\kvecfm_{\NumData}(\Input)^{\MyTop} \opE[ \NkGramfm(\lnfv_{1:\NumData})^{-1}  |   {\Dataset{}}] {\OutSampMean{1:\NumData}}
	,\nonumber
\end{align}	
and
\begin{align}
	&\opV[\fm(\Input) |  \Input, {\Dataset{}}  ]
	\nonumber\\&
	=
	\opE[\opV[ \fm(\Input)| \Input, \lnfv_{1:\NumData}, {\Dataset{}}  ] |  \Input, {\Dataset{}}   ]
	+
	\opV[ \opE[\fm(\Input) | \Input, \lnfv_{1:\NumData}, {\Dataset{}}  ] | \Input,  {\Dataset{}}   ]
	\nonumber\\&
	=
	\opE[ 
	\kerfm(\Input,\Input) -  \kvecfm_{\NumData}(\Input)^{\MyTop} \NkGramfm(\lnfv_{1:\NumData})^{-1}   \kvecfm_{\NumData}(\Input)
	| \Input,  {\Dataset{}}   ]
	\nonumber\\&\quad
	+
	\opV[
	\kvecfm_{\NumData}(\Input)^{\MyTop}  \NkGramfm(\lnfv_{1:\NumData})^{-1}   {\OutSampMean{1:\NumData}}
	| \Input,  {\Dataset{}}   ]
	\nonumber\\&
	=
	\kerfm(\Input,\Input)
	-  \kvecfm_{\NumData}(\Input)^{\MyTop}
	\opE[ 
	\NkGramfm(\lnfv_{1:\NumData})^{-1}  
	|   {\Dataset{}}   ]
	\kvecfm_{\NumData}(\Input)
	\nonumber\\&\quad
	+\opE[
	( \kvecfm_{\NumData}(\Input)^{\MyTop}  \NkGramfm(\lnfv_{1:\NumData})^{-1}   {\OutSampMean{1:\NumData}} )^{2}
	| \Input,  {\Dataset{}}   ]
	\nonumber\\&\quad
	-
	(
	\kvecfm_{\NumData}(\Input)^{\MyTop} \opE[ \NkGramfm(\lnfv_{1:\NumData})^{-1}  |   {\Dataset{}}] {\OutSampMean{1:\NumData}}
	)^{2}
	\nonumber\\&
	=
	\kerfm(\Input,\Input)
	-  \kvecfm_{\NumData}(\Input)^{\MyTop}
	\opE[ 
	\NkGramfm(\lnfv_{1:\NumData})^{-1}  
	|   {\Dataset{}}   ]
	\kvecfm_{\NumData}(\Input)
	\nonumber\\&\quad
	+
	\kvecfm_{\NumData}(\Input)^{\MyTop} 
	\Big(
	\opE[
	\NkGramfm(\lnfv_{1:\NumData})^{-1}   {\OutSampMean{1:\NumData}} {\OutSampMean{1:\NumData}}^{\top} \NkGramfm(\lnfv_{1:\NumData})^{-1} 
	| \Input,  {\Dataset{}}   ]
	\nonumber\\&\quad
	- 
	\opE[ \NkGramfm(\lnfv_{1:\NumData})^{-1}  |   {\Dataset{}}] 
	{\OutSampMean{1:\NumData}} {\OutSampMean{1:\NumData}}^{\top}
	\opE[ \NkGramfm(\lnfv_{1:\NumData})^{-1}  |   {\Dataset{}}] 	
	\Big)
	\kvecfm_{\NumData}(\Input)
	\nonumber\\&
	=
	\kerfm(\Input,\Input)
	+  \kvecfm_{\NumData}(\Input)^{\MyTop}
	\gpsdGramCollection( {\Dataset{}})
	\kvecfm_{\NumData}(\Input)
	.
\end{align}
This completes the proof.	
\hfill$\blacksquare$

\section{Proof of Theorem \ref{thm:GP_prob_fm}} \label{pf:GP_prob_fm}

Using {Fubini's theorem}, we obtain
\begin{align}
	&
	\mathrm{Pr}(      \fm(\Input)   > \CDFlev   |  \Input, {\Dataset{}})
	=
	\int_{\CDFlev}^{\infty} 
	p(\fm(\Input) |  \Input, {\Dataset{}} )
	\mathrm{d} \fm(\Input)	
	\nonumber\\&
	=
	\int_{\CDFlev}^{\infty} 
	\Big(
	\int
	p(\fm(\Input) |  \Input, {\Dataset{}}, \lnfv_{1:\NumData}  )
	p( \lnfv_{1:\NumData}  |  \Input, {\Dataset{}})
	\mathrm{d}  \lnfv_{1:\NumData} 
	\Big)
	\mathrm{d} \fm(\Input)	
	\nonumber\\&
	=
	\int
	\Big(
	\int_{\CDFlev}^{\infty}  
	p(\fm(\Input) |  \Input, {\Dataset{}}, \lnfv_{1:\NumData}  )
	\mathrm{d} \fm(\Input)
	\Big)
	p( \lnfv_{1:\NumData}  |  \Input, {\Dataset{}})
	\mathrm{d}  \lnfv_{1:\NumData} 
	\nonumber\\&
	=
	\int
	\mathrm{Pr}(      \fm(\Input)   > \CDFlev   |  \Input, {\Dataset{}}, \lnfv_{1:\NumData}  )
	p( \lnfv_{1:\NumData}  |  \Input, {\Dataset{}})
	\mathrm{d}  \lnfv_{1:\NumData} 	
	. \label{eq:Pr_using_Fubini}
\end{align}
Let $\gpmGhfm(\Input):= \kvecfm_{\NumData}(\Input)^{\MyTop}  \NkGramfm(\lnfv_{1:\NumData})^{-1}   {\OutSampMean{1:\NumData}}$ and 
$\gpsdGhfm(\Input)^{2}:=  \kerfm(\Input,\Input) -  \kvecfm_{\NumData}(\Input)^{\MyTop}\NkGramfm(\lnfv_{1:\NumData})^{-1}\kvecfm_{\NumData}(\Input)  $.
Utilizing \eqref{eq:gp_mean_given_h} and \eqref{eq:gp_var_given_h}, we derive
$p(\fm(\Input) |  \Input, {\Dataset{}}, \lnfv_{1:\NumData}  )
=
\mathcal{N}(\fm(\Input) |   \gpmGhfm(\Input) , \gpsdGhfm(\Input)^{2}  )$. 
Let $\CDFlevStandardNormal:=(\CDFlev - \gpmGhfm(\Input))/\gpsdGhfm(\Input)$ such that $\CDFlev= \gpmGhfm(\Input) + \CDFlevStandardNormal \gpsdGhfm(\Input)$.
We derive
\begin{align}
	&
	\mathrm{Pr}(      \fm(\Input)   > \CDFlev   |  \Input, {\Dataset{}}, \lnfv_{1:\NumData}  )
	\nonumber\\
	&=
	\int_{ \gpmGhfm(\Input) + \CDFlevStandardNormal \gpsdGhfm(\Input) }^{\infty}
	\mathcal{N}(\fmForInt |   \gpmGhfm(\Input) , \gpsdGhfm(\Input)^{2}  )
	\mathrm{d} \fmForInt
	\nonumber\\&
	=
	\int_{\CDFlevStandardNormal }^{\infty}
	\mathcal{N}(\fmForInt |  0, 1  )
	\mathrm{d} \fmForInt
	=:
	\Qfunc(\CDFlevStandardNormal)
	, \label{eq:Q_func_pf1}
\end{align}
where $\Qfunc$ is the Gaussian Q function satisfying
\begin{align}
	\Qfunc(\CDFlevStandardNormal)
	&
	=
	\int_{\CDFlevStandardNormal}^{\infty}
	\frac{1}{\sqrt{2\pi}} \exp(- \fmForInt^{2}/2)
	\mathrm{d} \fmForInt
	=
	\frac{1}{2}\erfc(\CDFlevStandardNormal/\sqrt{2})
	,
	\label{eq:Q_func_pf2}	
\end{align}
Substituting \eqref{eq:Q_func_pf2} into \eqref{eq:Q_func_pf1} results in
\begin{align}
	\mathrm{Pr}(      \fm(\Input)   > \CDFlev   |  \Input, {\Dataset{}}, \lnfv_{1:\NumData}  )
	&=
	\frac{1}{2}\erfc\Big(
	\frac{ \CDFlev - \gpmGhfm(\Input) }{\sqrt{2} \gpsdGhfm(\Input) }
	\Big)
	.  \label{eq:Q_func_pf3} 
\end{align}
Substituting \eqref{eq:Q_func_pf3} into \eqref{eq:Pr_using_Fubini} results in
\begin{align}
	&
	\mathrm{Pr}(      \fm(\Input)   > \CDFlev   |  \Input, {\Dataset{}})
	\nonumber\\
	&
	=
	\int
	\frac{1}{2}\erfc\Big(
	\frac{ \CDFlev - \gpmGhfm(\Input) }{\sqrt{2} \gpsdGhfm(\Input) }
	\Big)
	p( \lnfv_{1:\NumData}  |  \Input, {\Dataset{}})
	\mathrm{d}  \lnfv_{1:\NumData} 	
	\nonumber\\&
	=\CDFdelta(\CDFlev,\Input, {\Dataset{}}) 
	.\nonumber
\end{align}
This completes the proof.	
\hfill$\blacksquare$

\section{Proof of Proposition \ref{thm:SNIS_PDF_ratio}} \label{pf:SNIS_PDF_ratio}

	Because of $\unConstPDF=\PDF(  {\OutSampMean{1:\NumData}}    |  	\Input_{1:\NumData}  )$,
using Bayes' theorem with \eqref{eq:jointPDF} under Assumption \ref{ass_GPs}  yields
\begin{align}
	\unPDF( \lnfv_{1:\NumData}   |  {\OutSampMean{1:\NumData}} , 	\Input_{1:\NumData}  )
	&=
	\unConstPDF
	\frac{
		\PDF(  {\OutSampMean{1:\NumData}}    |  \lnfv_{1:\NumData}, 	\Input_{1:\NumData}  )
		\PDF( \lnfv_{1:\NumData} | 	\Input_{1:\NumData}  )
	}{
		\PDF(  {\OutSampMean{1:\NumData}}    |  	\Input_{1:\NumData}  )
	}
	\nonumber\\&
	= 
	\PDF(  {\OutSampMean{1:\NumData}}    |  \lnfv_{1:\NumData}, 	\Input_{1:\NumData}  )
	\PDF( \lnfv_{1:\NumData} | 	\Input_{1:\NumData}  )
	\nonumber\\&
	=
	\mathcal{N}(	{\OutSampMean{1:\NumData}}	| 0,\NkGramfm(\lnfv_{1:\NumData}) )
	\mathcal{N} (\lnfv_{1:\NumData}  | 0 ,\kGramfv_{\NumData} )		
	.\nonumber
\end{align}
Combining this equality with \eqref{eq:def_proposalPDF} yields \eqref{eq:PDFratio_result}.
This completes the proof.
\hfill$\blacksquare$

\section{Proof of Theorem \ref{thm:PDF_lnfv1}} \label{pf:PDF_lnfv1}
Initially, we demonstrate   \eqref{eq:pdf_lnfv_given_y_var}.
Utilizing  Bayes' theorem yields
\begin{align}
	p( \lnfv_{1:\NumData}   | 	\Input_{1:\NumData}, {\OutSampVar{1:\NumData}}  )
	\propto
	p( {\OutSampVar{1:\NumData}}   |  \lnfv_{1:\NumData},	\Input_{1:\NumData}  )
	p( \lnfv_{1:\NumData}   | 	\Input_{1:\NumData} )	
	.	\label{eq:Bayes_lnfv_given_dat}
\end{align}
We define
${\chiSamp{\IDt}} := (\NumSamp-1) {\OutSampVar{\IDt}}/( \fv(\Input_{\IDt})^{2})$.
We derive
\begin{align}
	{\chiSamp{\IDt}} 
	&=
	\sum_{\IDsamp=1}^{\NumSamp} 
	\frac{
		\Big(
		(\fm(\Input_{\IDt}) + \Noise_{\IDt,\IDsamp})
		- 
		\Big(
		\frac{1}{\NumSamp}\sum_{\IDbsamp=1}^{\NumSamp} (\fm(\Input_{\IDt}) + \Noise_{\IDt,\IDbsamp})
		\Big)
		\Big)^{2}
	}{
		\fv(\Input_{\IDt})^{2}
	}
	\nonumber\\&
	=
	\sum_{\IDsamp=1}^{\NumSamp}  \Big(		
	\frac{\Noise_{\IDt,\IDsamp}}{\fv(\Input_{\IDt})}		
	- 
	\Big(
	\frac{1}{\NumSamp}\sum_{\IDbsamp=1}^{\NumSamp} 
	\frac{\Noise_{\IDt,\IDbsamp}}{\fv(\Input_{\IDt})}		
	\Big)
	\Big)^{2}
	.\nonumber
\end{align}	
Recall that $\Noise_{\IDt,\IDsamp}/\fv(\Input_{\IDt})$ follows 
$\mathcal{N}(0,1)$ and is independent of $(\IDt,\IDsamp,\Input_{\IDt},\fv(\Input_{\IDt}))$.
Applying Cochran's theorem \cite{Cochran18}, 
for each $\IDt$, ${\chiSamp{\IDt}} $ follows the chi-squared distribution ${\chiPDF{\NumSamp-1}}$ with $({\NumSamp-1})$ degrees of freedom.
Subsequently, the logarithm of $ {\OutSampVar{\IDt}} $ is expressed as
\begin{align}
	\ln {\OutSampVar{\IDt}} 
	= \ln \frac{ \fv(\Input_{\IDt})^{2} }{\NumSamp-1}  {\chiSamp{\IDt}}  
	&= \ln \fv(\Input_{\IDt})^{2} - \ln(\NumSamp-1) + \ln {\chiSamp{\IDt}}
	\nonumber\\&
	= \lnfv_{\IDt} - \ln(\NumSamp-1) + \ln {\chiSamp{\IDt}}
	.\nonumber
\end{align}
Here, utilizing $\opE[\ln {\chiSamp{\IDt}}]=\ln 2+ {\polygamma{0}}((\NumSamp-1)/2)$ \cite[Eq. (1)]{arxiv15} and the definition \eqref{eq:def_slnfv} of ${\slnfv{\IDt}}$ yields
\begin{align}
	{\slnfv{\IDt}}
	&=
	\ln {\OutSampVar{\IDt}}  + \ln(\NumSamp-1) - \opE[\ln {\chiSamp{\IDt}} ]
	=\lnfv_{\IDt}  + \ln {\chiSamp{\IDt}} - \opE[\ln {\chiSamp{\IDt}} ]
	.\nonumber
\end{align}
Because $ {\chiSamp{\IDt}} $ is independent of ${\lnfv_{\IDt}}$,  we derive
\begin{align}
	\opE[{\slnfv{\IDt}} |  {\lnfv_{\IDt}} ]
	&=
	\opE[ {\lnfv_{\IDt}}  |   {\lnfv_{\IDt}} ]
	+
	\opE[\ln {\chiSamp{\IDt}}  |   {\lnfv_{\IDt}}  ]
	-
	\opE[\ln {\chiSamp{\IDt}} ]
	= {\lnfv_{\IDt}}
	.\nonumber
\end{align}
Using the relation $\opE[(\ln {\chiSamp{\IDt}})^{2} ] - \opE[\ln {\chiSamp{\IDt}} ]^{2} ={\polygamma{1}}((\NumSamp-1)/2))$ in \cite[Eq. (1)]{arxiv15} yields
\begin{align}
	\opV[ {\slnfv{\IDt}}   | {\lnfv_{\IDt}} ]
	&=
	\opE[ (  {\lnfv_{\IDt}}+\ln {\chiSamp{\IDt}} - \opE[\ln {\chiSamp{\IDt}} ] )^2   |  {\lnfv_{\IDt}}  ]
	\nonumber\\&\qquad
	- \opE[   {\lnfv_{\IDt}}+\ln {\chiSamp{\IDt}} - \opE[\ln {\chiSamp{\IDt}}]  |   {\lnfv_{\IDt}}  ]^{2}
	\nonumber\\&
	=
	\opE[(\ln {\chiSamp{\IDt}})^{2} ] - \opE[\ln {\chiSamp{\IDt}} ]^{2}
	= 
	\lnfvvar 
	.\nonumber
\end{align}	
Therefore, we have $p(  {\slnfv{\IDt}}   |  \lnfv_{\IDt}  )=	\LCSdist(  {\slnfv{\IDt}}   | \lnfv_{\IDt}, \lnfvvar )$.	
Finally, because the logarithm is monotonically increasing, we obtain the following based on the change of variables at ${\OutSampVarForChangeOfVar= {\OutSampVar{\IDt}} }$: 
\begin{align}
	p( {\OutSampVar{1:\NumData}}   |  \lnfv_{1:\NumData},	\Input_{1:\NumData}  )
	&=
	\prod_{\IDt=1}^{\NumData}
	p( {\OutSampVar{\IDt}}   |  \lnfv_{\IDt} )
	=
	\prod_{\IDt=1}^{\NumData}
	p(  {\slnfv{\IDt}}   |  \lnfv_{\IDt}  )
	\Big|
	\frac{\mathrm{d}}{  \mathrm{d}\OutSampVarForChangeOfVar } \ln\OutSampVarForChangeOfVar
	\Big|
	.	\nonumber
\end{align}
Substituting this relation and $p(  {\slnfv{\IDt}}   |  \lnfv_{\IDt}  )=	\LCSdist(  {\slnfv{\IDt}}   | \lnfv_{\IDt}, \lnfvvar )$ into \eqref{eq:Bayes_lnfv_given_dat} yields \eqref{eq:pdf_lnfv_given_y_var}.

Next, we demonstrate \eqref{eq:cond_proposalPDF}.
Because Assumption \ref{ass_GPs} implies $	\kGramfv_{\NumData} \succ 0$ and
thus 
$(\kGramfv_{\NumData}  + \lnfvvar \Identity )
-  \kGramfv_{\NumData}^{\MyTop}  \kGramfv_{\NumData}^{-1} \kGramfv_{\NumData} 
\succ 
0
$,
using Schur complement implies 
\begin{align}
	\begin{bmatrix}
		\kGramfv_{\NumData}  +\lnfvvar \Identity
		&  \kGramfv_{\NumData}
		\\	\kGramfv_{\NumData}	 & \kGramfv_{\NumData}
	\end{bmatrix}
	\succ 0
	.\nonumber
\end{align}
Suppose that some $({\slnfv{1:\NumData}}, {\lnfv_{1:\NumData}})$ does not satisfy:
\begin{align}
	p\Big(\begin{bmatrix}
		{\slnfv{1:\NumData}}
		\\
		{\lnfv_{1:\NumData}}
	\end{bmatrix}
	\Big| \Input_{1:\NumData}
	\Big)
	&=
	\mathcal{N} \Bigg( 
	\begin{bmatrix}
		{\slnfv{1:\NumData}}
		\\
		{\lnfv_{1:\NumData}}
	\end{bmatrix}
	\Bigg|0 ,
	\begin{bmatrix}
		\kGramfv_{\NumData}  +\lnfvvar \Identity
		&  \kGramfv_{\NumData}
		\\	\kGramfv_{\NumData}	 & \kGramfv_{\NumData}
	\end{bmatrix}
	\Bigg)
	\nonumber\\&\quad\times \PDFratio
	,\label{eq:jointPDF_lnfv_ass1}
\end{align}
where
$\PDFratio:=
p( {\slnfv{1:\NumData}} | \Input_{1:\NumData}, {\lnfv_{1:\NumData}} )
/	\mathcal{N}({\slnfv{1:\NumData}} | \lnfv_{1:\NumData}, \lnfvvar \Identity  )
$.
Then, such $({\slnfv{1:\NumData}}, {\lnfv_{1:\NumData}})$ cannot satisfy
\begin{align}
	&p({\slnfv{1:\NumData}}
	| \Input_{1:\NumData},{\lnfv_{1:\NumData}})
	\nonumber\\&
	=
	\mathcal{N} \big({\slnfv{1:\NumData}}\big|
	\kGramfv_{\NumData}\kGramfv_{\NumData}^{-1}{\lnfv_{1:\NumData}}  ,
	(\kGramfv_{\NumData}  + \lnfvvar \Identity)  
	-  \kGramfv_{\NumData}^{\MyTop}  \kGramfv_{\NumData}^{-1}  \kGramfv_{\NumData} 			
	\big)
	\nonumber\\&\quad\times \PDFratio
	\nonumber\\&
	=
	\mathcal{N} \big({\slnfv{1:\NumData}}\big|
	{\lnfv_{1:\NumData}}  ,		\lnfvvar \Identity		\big)
	\PDFratio
	.\nonumber
\end{align}
As this contradicts the definitions of $\PDFratio$,
we derive \eqref{eq:jointPDF_lnfv_ass1} for every $({\slnfv{1:\NumData}}, {\lnfv_{1:\NumData}})$.
Therefore, we derive the following relations based on \cite{Rasmussen06}:
\begin{align}
	p({\lnfv_{1:\NumData}}
	| \Input_{1:\NumData}, {\slnfv{1:\NumData}})
	&=
	\mathcal{N} \big( {\lnfv_{1:\NumData}}\big|
	\gpmlnfv  ,
	\gpcovlnfv
	\big)
	\PDFratio
	\nonumber\\&
	=
	\proposalPDF( \lnfv_{1:\NumData}    )\PDFratio
	.\nonumber
\end{align}
This result leads to \eqref{eq:cond_proposalPDF} based on Bayes' theorem:
\begin{align}
	\proposalPDF( \lnfv_{1:\NumData}    )
	&=
	p({\lnfv_{1:\NumData}}
	| \Input_{1:\NumData}, {\slnfv{1:\NumData}})/\PDFratio
	\nonumber\\&
	\propto
	p( \lnfv_{1:\NumData} | 	\Input_{1:\NumData}  )
	p(  {\slnfv{1:\NumData}} | \lnfv_{1:\NumData} ,	\Input_{1:\NumData}  )/\PDFratio
	\nonumber\\&
	=
	p( \lnfv_{1:\NumData} | 	\Input_{1:\NumData}  )
	\prod_{\IDt=1}^{\NumData}
	\mathcal{N}({\slnfv{\IDt}} | \lnfv_{\IDt}, \lnfvvar  )
	.\nonumber
\end{align}		
This completes the proof.	
\hfill$\blacksquare$
	
\section{Proof of Theorem \ref{thm:L1_control}} \label{pf:L1_control}
Utilizing \eqref{eq:def_sys_sim}, we derive 
\begin{align}
	&|{\sysx{\MyT+1}{2}} - {\sysr{\MyT+1}{2}}| \leq \sysrMargin
	\nonumber\\&
	\Leftrightarrow
	|
	{\sysx{\MyT}{2}}
	+
	\MydT ( 
	\fm({\sysxV{\MyT}})
	+ 	{\sysPreu{\MyT}} 
	+  {\sysu{\MyT}}
	)
	-	 {\sysr{\MyT+1}{2}} 
	|
	\leq \sysrMargin
	\nonumber\\&
	\Leftrightarrow
	|
	\fm({\sysxV{\MyT}})
	- \CDFlevSpecialTerm  
	+  {\sysu{\MyT}}
	|
	\leq \sysrMargin/\MydT
	\nonumber\\&
	\Leftrightarrow
	- (\sysrMargin/\MydT) + \CDFlevSpecialTerm - {\sysu{\MyT}}
	\leq
	\fm({\sysxV{\MyT}})
	\leq
	(\sysrMargin/\MydT) + \CDFlevSpecialTerm - {\sysu{\MyT}}
	.\nonumber
\end{align}
Utilizing Corollary \ref{thm:GP_prob_fm_special} and its proof, we derive
\begin{align}
	&\mathrm{Pr}(  |{\sysx{\MyT+1}{2}} - {\sysr{\MyT+1}{2}}| \leq \sysrMargin   |  {\sysxV{\MyT}}, {\Dataset{}})
	\nonumber\\&
	=
	\mathrm{Pr}\Big(   
	- (\sysrMargin/\MydT) + \CDFlevSpecialTerm - {\sysu{\MyT}}
	\leq
	\fm({\sysxV{\MyT}})
	\nonumber\\&\qquad\qquad\qquad\qquad\qquad\quad
	\leq
	(\sysrMargin/\MydT) + \CDFlevSpecialTerm - {\sysu{\MyT}}   
	\Big|  {\sysxV{\MyT}}, {\Dataset{}}
	\Big)
	\nonumber\\&
	=
	\CDFdelta(  	- (\sysrMargin/\MydT) + \CDFlevSpecialTerm - {\sysu{\MyT}}  ,{\sysxV{\MyT}}, {\Dataset{}})
	\nonumber\\&\qquad
	-\CDFdelta(       (\sysrMargin/\MydT) + \CDFlevSpecialTerm - {\sysu{\MyT}}  ,{\sysxV{\MyT}}, {\Dataset{}}) 
	\nonumber\\&
	\geq
	\CDFdelta(  \CDFlevSpecialL ,{\sysxV{\MyT}}, {\Dataset{}})
	-\CDFdelta( \CDFlevSpecialU  ,{\sysxV{\MyT}}, {\Dataset{}}) 
	\nonumber\\&
	= 1- \CDFdeltaSpecial
	,\label{eq:anotherRep_CC}
\end{align}	
if we have
$	- (\sysrMargin/\MydT) + \CDFlevSpecialTerm - {\sysu{\MyT}}  \leq \CDFlevSpecialL$
and
$	(\sysrMargin/\MydT) + \CDFlevSpecialTerm - {\sysu{\MyT}}  \geq \CDFlevSpecialU$.
These inequalities are equivalent to 
\begin{align}
	\sparseControllerL
	=
	- \CDFlevSpecialL	- (\sysrMargin/\MydT) + \CDFlevSpecialTerm 
	\leq  {\sysu{\MyT}} \leq 
	-\CDFlevSpecialU +	(\sysrMargin/\MydT) + \CDFlevSpecialTerm 
	=
	\sparseControllerU
	.\label{eq:sufficientCond_CC}
\end{align}
Thus, using \eqref{eq:def_L1_controller} and \eqref{eq:def_L1_controller_cond} satisfies \eqref{eq:sufficientCond_CC} and thus \eqref{eq:anotherRep_CC}.
This completes the proof.
\hfill$\blacksquare$

\end{document}